\documentclass[a4paper,12pt]{article}

\usepackage{latexsym}
\usepackage{bm}
\usepackage{amsbsy}
\usepackage[authoryear]{natbib}
\usepackage{amsmath,amsthm,amssymb,mathtools, mathrsfs}
\usepackage{titling}
\usepackage{csquotes}
\usepackage[left=2.5cm,top=2cm,right=2cm,bottom=2.5cm]{geometry} 

\newcommand{\abs}[1]{\left\lvert #1 \right\rvert}
\newcommand{\cc}{\cdot}
\newcommand{\ccs}{\hspace*{0.1cm} \cc \hspace*{0.1cm}}
\newcommand{\dd}{\hspace*{0.05cm}\mathrm{d}}
\newcommand{\given}{\hspace*{0.1cm} \middle\vert \hspace*{0.1cm}}
\newcommand{\norm}[1]{\left\lVert #1 \right\rVert}

\newcommand{\myitem}{\item[$\cc$]}

\newcommand{\R}{\mathbb{R}}

\newcommand{\A}{\mathcal{A}}

\newcommand{\C}{\mathcal{C}}
\newcommand{\D}{\mathcal{D}}
\newcommand{\E}{\mathcal{E}}
\newcommand{\F}{\mathcal{F}}

\newcommand{\K}{\mathcal{K}}
\newcommand{\M}{\mathcal{M}}
\newcommand{\N}{\mathcal{N}}
\newcommand{\V}{\mathcal{V}}

\newcommand{\U}{\mathnormal{U}}

\newcommand{\Dnull}{\mathscr{D}_0}
\newcommand{\Enull}{\mathbb{E}_0}
\newcommand{\Gnull}{\mathcal{L}_0}

\newcommand{\Pnull}{\mathbb{P}_0}

\newcommand{\cPnull}{\xrightarrow{\Pnull}}
\newcommand{\cDnull}{\xrightarrow{\Dnull}}

\newcommand{\DT}{\mathscr{D}_\theta}
\newcommand{\ET}{\mathbb{E}_\theta}
\newcommand{\GT}{\mathcal{L}_\theta}
\newcommand{\HT}{\mathcal{H}_\theta}
\newcommand{\PT}{\mathbb{P}_\theta}
\newcommand{\VT}{\mathbb{V}\mathrm{ar}_\theta}

\newcommand{\cPT}{\xrightarrow{\PT}}
\newcommand{\cDT}{\xrightarrow{\DT}}

\newcommand{\FSH}{\mathscr{H}_\theta}
\newcommand{\FSL}{\mathscr{L}^2(\mu_\theta)}

\newcommand{\vtwo}[2]{\left(\begin{array}{c} #1 \\ #2 \end{array}\right)}

\newcommand{\mtwo}[4]{\left(\begin{array}{cc} #1 & #2 \\ #3 & #4 \end{array}\right)}

\numberwithin{equation}{section}

\newtheorem{defi}{Definition}[section]

\newtheorem{cond}[defi]{Condition}
\newtheorem{lem}[defi]{Lemma}
\newtheorem{thm}[defi]{Theorem}

\newtheorem{prop}[defi]{Proposition}

\title{Prediction-based inference for integrated diffusions
    with high-frequency data}

\author{{\Large Emil S. J\o rgensen and Michael
 S\o rensen\thanks{ORCID: 0000-0001-7233-5377}} \\
Dept.\ of Mathematical Sciences, University of Copenhagen \\
Universitetsparken 5, DK-2100 Copenhagen {\O}, Denmark \\
E-mail: emil.joergensen@gmail.com and michael@math.ku.dk} \vspace{2mm} 

\date{}

\begin{document}
	\maketitle

\begin{abstract}
\vspace{1mm}
\noindent
We consider parametric inference for an ergodic and stationary diffusion process, when the data are high-frequency observations of the integral of the diffusion process. Such data are obtained via certain measurement devices, or if positions are recorded and speed is modelled by a diffusion.  In finance, realized volatility or variations thereof can be used to construct observations of the latent integrated volatility process. Specifically, we assume that the integrated process is observed at $n$ equidistant, deterministic time points $i\Delta_n$ for some $\Delta_n>0$ and consider the 
high-frequency/infinite horizon asymptotic scenario, where $n \to \infty$, $\Delta_n \to 0$ and $n\Delta_n \to \infty$. Subject to mild standard regularity conditions on $(X_t)$, we prove the asymptotic existence and uniqueness of a consistent estimator for useful and tractable classes of prediction-based estimating functions. Asymptotic normality of the estimator is obtained under the additional rate assumption $n\Delta_n^2 \to 0$. The proofs are based on the useful Euler-Itô expansions of transformations of diffusions and integrated diffusions, which we study in some detail. \\

\vspace{2mm}

\noindent
{\it Keywords:} Euler-Itô expansion, high-frequency data, integrated diffusion, potential operator, prediction-based estimating functions, $\rho$-mixing.
\end{abstract}

	\section{Introduction}\label{sec:introduction}

Diffusion processes are used to model dynamical systems in many scientific areas, particularly in finance. While these processes are defined in terms of continuous-time dynamics, the available time series are observations of the system, or components of it,  at discrete points in time. To bridge this gap between models and data, statistical methods for discretely observed continuous-time stochastic processes is a very active area of research, where the availability of high-frequency data has generated considerable interest in the construction and study of estimators and test statistics with nice asymptotic properties as the time between consecutive observations tends to zero.

This paper deals with parametric inference for integrated diffusion models $(I_t)_{t \geq 0}$ of the general form
\begin{eqnarray}
\label{eqn:I} dI_t	&=& X_t dt, \hspace{6mm} I_0 = 0 \\
\label{eqn:X} dX_t	&=& a(X_t;\theta)dt + b(X_t;\theta)dB_t,
\end{eqnarray}
where the diffusion process $(X_t)$ takes values in an open interval $(l,r) \subseteq \R$ and is ergodic with invariant distribution $\mu_\theta$. We assume that $(X_t)$ is strictly stationary under the probability measure $\PT$, i.e.\ that $X_0 \sim \mu_\theta$.  The parameter $\theta$ takes values in $\Theta \subseteq \R^d$ for some $d \geq 1$.

Let the data be a single time series $\{I_{t^n_i}\}_{i=0}^n$ of observations of the integrated process at deterministic, equidistant points in time, i.e.\ $t^n_i=i\Delta_n$ for some $\Delta_n>0$. The process $(X_t)$ is latent. To enable consistent estimation of both drift and diffusion parameters, we consider the high-frequency/infinite horizon sampling scenario
\begin{equation}\label{eq:HF}
n \to \infty, \hspace*{0.5cm} \Delta_n \to 0, \hspace*{0.5cm} n \cc \Delta_n \to \infty,
\end{equation}
where the time horizon tends to infinity with the number of observations.
An equivalent observation scheme is given by the transformed variables
\begin{equation}\label{eq:Y}
Y_i = \Delta_n^{-1} \left(I_{t^n_i}-I_{t^n_{i-1}}\right) = \Delta_n^{-1} \int_{(i-1)\Delta_n}^{i\Delta_n} X_s \dd s, \ \ \ i=1,\ldots,n.
\end{equation}
Note that for fixed $\Delta_n$, the sequence $\{Y_i\}_{i=1}^\infty $ inherits stationary under $\PT$ from $(X_t)$.

We construct and study estimators using prediction-based estimating functions, which were proposed by \cite{pbef-2000,pbef-2011} as a versatile framework for parametric inference in non-Markovian diffusion-type models. This approach was applied to integrated diffusions in \cite{iid}. Their main contribution was to derive explicit Godambe-Heyde optimal prediction-based estimating functions for diffusions belonging to a tractable class of models that includes the Ornstein-Uhlenbeck process and the square-root (CIR) process and prove low-frequency asymptotic results. The main contribution of the present paper is to establish a high-frequency asymptotic theory for a class of prediction-based estimators, in particular, existence, uniqueness, consistency and asymptotic normality within the asymptotic scenario \eqref{eq:HF}. Our proofs build on similar results for diffusion models in \cite{ped}.

Parametric estimation for discretely observed diffusion models $(X_t)$ of the form \eqref{eqn:X} is the topic of numerous papers of which we can only list a few:
\cite{edc-1986}, \cite{edp}, \cite{mmp}, \cite{mef}, \cite{eed-1997}, \cite{Shoji1998},
\cite{imh}, \cite{mle}, \cite{eel}, \cite{bladtsorensen}, \cite{MeulenSchauer}, \cite{eed-2017}, \cite{Pilipovic24} and \cite{Eduardo}, see also the review paper \cite{sm}.

Although to a lesser extent, parametric inference for integrated diffusions has also been the topic of several papers in econometrics and statistics. In the econometric literature,  the problem appears in the guise of continuous-time stochastic volatility models. To illustrate this, consider the simple stochastic volatility model for an asset price, 
$dS_t = \sqrt{v_t} dW_t$, where $(W_t)$ denotes a standard Brownian motion. The availability of high-frequency observations of $(S_t)$ enables us to filter out discrete time observations of the latent integrated volatility, $\int_0^t v_s \dd s$, and view these as our data. Nonparametric filtering of integrated volatility from high-frequency time series is an emblematic problem in financial econometrics. An extensive list of references can be found in \cite{hffe}. This procedure has lead to the construction of estimators for integrated processes in the case where the volatility dynamics are modeled by a time-homogeneous, stationary diffusion process similar to (\ref{eqn:X}),
e.g., the GARCH(1,1) diffusion model in \cite{da}, the square-root (CIR) process in \cite{osv-1993} and the $3/2$ diffusion in \cite{ptm}. Estimation based on realized power variations that approximate the integrated volatility has been studied by e.g.\ \cite{esv}, \cite{rvsv} and \cite{svj}. \cite{mim} developed high-frequency (infill) asymptotics for GMM estimators of parameters in the diffusion coefficient of the volatility process by preliminary filtering of the spot volatility instead. Apart from the work by \cite{iid} that was summarized above, papers in the statistical literature include \cite{lid-2010}, who proposed a simulated EM-algorithm to obtain maximum likelihood estimators for integrated diffusions contaminated by noise, e.g.\ microstructure noise, and \cite{edc-2000,id-2006}, who proposed an approach that has significantly influenced the present paper. In this approach, which  is based on expansion results for small values of $\Delta_n$, the construction of contrast estimators utilizes that, as $\Delta_n \to 0$, $Y_i \approx X_{t^n_{i-1}}$, which allows high-frequency limit results for integrated diffusions to be established. Finally, nonparametric estimation of the drift and diffusion coefficient in the latent diffusion process from high-frequency observations of $(I_t)$ was studied by \cite{eid}. 

The paper is organized as follows. In Section \ref{sec:preliminaries}, we present preliminaries: the notation and concepts used in the paper, our general assumptions on $(X_t)$, and the prediction-based estimating functions considered in the paper. Section \ref{sec:expansions} contains an expansion of a transformation of the diffusion process of the form $f(X_{t^n_i}) = f(X_{t^n_{i-1}}) + \Delta_n^{1/2} \partial_x f(X_{t^n_{i-1}})b(X_{t^n_{i-1}};\theta)\varepsilon_{1,i}+\varepsilon_{2,i}$ and the similar result for $f(Y_i)$. The expansion for the integrated process, $Y_i$, was essentially pointed out by \cite{edc-2000}. These expansions serve as essential building blocks for the asymptotic theory in our paper, and because they are related to the classic Euler approximation, we refer to them as \emph{Euler-Itô expansions}. Section \ref{sec:limits} is devoted to limit theorems for integrated diffusions, while the asymptotic results on existence, uniqueness, consistency and asymptotic normality of our estimators are developed in Section \ref{sec:asymptotics}. Proofs and some auxiliary results 
are deferred to Section \ref{sec:proofs}, and Section \ref{sec:conclusion} concludes.
	\section{Preliminaries}\label{sec:preliminaries}

In this section we present the general notation used throughout the
paper and some core concepts, formulate our main assumptions on the
underlying diffusion model $(X_t)$, and define a tractable class of
prediction-based estimating functions. 

\subsection{Notation and concepts}
\label{subsec:notation}

Our general notation is as follows:

\begin{enumerate}

\item The true parameter value is denoted by $\theta_0$.
  
\item We denote the state space of $(X_t)$ by $(S,\mathscr{B}(S))$,
  where  $S=(l,r)$ for $-\infty \leq l < r \leq \infty$ is an
  open interval equipped with its Borel $\sigma$-algebra $\mathscr{B}(S)$.

\item We write $\mu_\theta(f)=\int_S f(x) \mu_\theta(dx)$ for
  functions $f:S \to \R$, and denote by $\mathscr{L}^p(\mu_\theta)$
  the space of functions $f$,  for which $\mu_\theta(|f|^p) <
  \infty$. Moreover, $\mathscr{L}_0^p(\mu_\theta)$ denotes the subset
  of $\mathscr{L}^p(\mu_\theta)$ for which  $\mu_\theta(f)=0$. 

\item By $\cPT$ and $\cDT$ we denote convergence in probability and in
  distribution under $\PT$.  

\item A function $f:S \times \Theta \to \R$ is said to be of
  \emph{polynomial growth in $x$} if there exists a $C_\theta>0$ such
  that $|f(x;\theta)| \leq C_\theta(1 + |x|^{C_\theta})$ for all $x \in S$. 

\item In this paper, $R(\Delta,x;\theta)$ denotes a generic real function such that
\begin{equation}\label{eq:R}
|R(\Delta,x;\theta)| \leq F(x;\theta),
\end{equation}
where $F$ is of polynomial growth in $x$.

\item  For real functions $f$ and $g$ defined on a measure space
  $(A,\mathscr{A}, \nu)$, we write $f \leq_C g$ if there exists a
  constant $C>0$ such that $f(a) \leq C g(a)$, for $\nu$-almost all $a
  \in A$. In particular, $f$ and $g$ can be random variables.

\item We denote by $\C^{j,k}_p(S \times \Theta)$, $j,k \geq 0$, the
  class of real-valued functions $f(x;\theta)$ satisfying that 
\begin{itemize}
\myitem $f$ is $j$ times continuously differentiable w.r.t.\ $x$;
\myitem $f$ is $k$ times continuously differentiable w.r.t.\ $\theta_1,\ldots,\theta_d$;
\myitem $f$ and all partial derivatives $\partial_x^{j_1}\partial_{\theta_1}^{k_1} \cdots \partial_{\theta_d}^{k_d}f$, $j_1 \leq j$, $k_1+\cdots+k_d \leq k$, are of polynomial growth in $x$.
\end{itemize}
We define $\C^j_p(S)$ analogously as a class of function $f : S \to \R$.

\item The \emph{infinitesimal generator} of a diffusion process $(X_t)$ is
  denoted by $\A_\theta$, and the corresponding domain by
  $\D_{\A_\theta}$. If $(X_t)$ satisfies Condition \ref{cond:X} below,
  then $\C^2_p(S) \subseteq
  \D_{\A_\theta}$, and for all $f \in \C^2_p(S)$, $\mathcal{A}_\theta f = \GT f$, where 
\begin{equation}\label{eq:GT}
\GT f(x) = a(x;\theta)\partial_xf(x) + \frac{1}{2}b^2(x;\theta)\partial_x^2f(x);
\end{equation}
see e.g. \cite{sef}.

\item For any diffusion process $(X_t)$, the \emph{potential} operator
  is given by
\begin{equation}\label{eq:U}
\U_\theta(f)(x) = \int_0^\infty P_t^\theta f(x) \dd t.
\end{equation}
It is defined for functions $f: S \to \R$ in the set $\D_{U_\theta} = \{ f :
\int_0^\infty | P_t^\theta f(x)| \dd t < \infty  \}$, where
$P_t^\theta$ denotes the \emph{transition operator}
$P_t^\theta f(x) = \ET\left(f(X_t) \given X_0=x\right)$.

\item We define
\begin{equation}\label{eq:FSH}
\FSH = \{f \in \C^4_p(S) \cap \D_{U_\theta} : \mu_\theta(f)=0, \U_\theta(f) \in \C^2_p(S)\}.
\end{equation}

\end{enumerate}

The potential operator plays an important role in our asymptotic
theory. General results ensuring that $f \in \D_{U_\theta}$ and
regularity of $\U_\theta(f)$ can be found in \cite{pe}. For an ergodic
diffusion with invariant measure $\mu_\theta$, $f \in \D_{U_\theta}$
must necessarily satisfy $\mu_\theta(f) = 0$. The reason why the
potential operator is important in our theory is that under regularity
conditions it satisfies the Poisson equation $\GT (\U_\theta(f)) =
-f$. If $(X_t)$ satisfies Condition \ref{cond:X} below, this is the
case for $f \in \FSH$, see e.g. Proposition 3.3 in \cite{ped}.

\subsection{Model assumption}
\label{subsec:model}

To establish asymptotic results for integrated diffusions of the
general form \eqref{eqn:I}-\eqref{eqn:X}, we impose the following
regularity conditions on $(X_t)$.

\begin{cond}\label{cond:X}
For any $\theta \in \Theta$, the stochastic differential equation
\begin{equation*}
dX_t = a(X_t;\theta) dt + b(X_t;\theta) dB_t, \hspace{0.2cm} X_0 \sim \mu_\theta
\end{equation*}
has a \emph{weak solution} $\left(\Omega,(\F_t),\PT,(B_t),(X_t)\right)$ for
which $\F_t = \sigma\left(X_0,(B_s)_{s \leq t}\right)$, $X_0$ is independent
of $(B_t)$ and
\begin{itemize}
\myitem $(X_t)$ is stationary and $\rho$-mixing under $\PT$.
\end{itemize}
Moreover, the triplet $(a,b,\mu_\theta)$ satisfies the regularity conditions
\begin{itemize}
\myitem $a,b \in \C^{2,0}_p(S \times \Theta)$,
\myitem $\abs{a(x;\theta)} + \abs{b(x;\theta)}\leq_C 1+|x|$,
\myitem $b(x;\theta)>0$ for $x \in S$,
\myitem $\int_S |x|^k \mu_\theta(dx) < \infty$ for all $k \geq 1$.
\end{itemize}
\end{cond}

\noindent
We define a discretized filtration by $\F^n_i:=\F_{t^n_i}$.

Easily checked conditions for $\rho$-mixing of one-dimensional
diffusion processes are given in \cite{svhm}. In particular, for
an ergodic and time-reversible diffusion process, the $\rho$-mixing
property is equivalent to the existence of a spectral gap. The latter
means that the largest non-zero eigenvalue of the generator
$\A_\theta$ of the diffusion process is strictly smaller that
zero. From spectral theory it is known that all eigenvalues are
non-positive. The size of the spectral gab, which we denote by
$\lambda_\theta$, equals minus the largest non-zero eigenvalue.

Under Condition \ref{cond:X}, it is well-known that for $f \in
\mathscr{L}_0^2(\mu_\theta)$ it holds that that $\| P_t^\theta f \|_2 \leq
e^{-\lambda t} \| f \|_2 $ for all $t \geq 0$, where $\|  f  \|_2
= \mu_\theta(f^2)^{\frac{1}{2}}$, see e.g.\ Lemma 3.2 in
\cite{ped}. Using this we can define $\int_0^\infty P_t^\theta f(x) dt$
as the $\|  \cdot \|_2$-limit of $\int_0^N P_t^\theta f (x)dt$ as $N \to
\infty$. This limit exists and belongs to $\mathscr{L}_0^2(\mu_\theta)$
because $\int_0^N P_t^\theta f dt$ is a Cauchy sequence in
$\mathscr{L}_0^2(\mu_\theta)$. Thus under Condition \ref{cond:X},
$U_\theta$ is a well-defined mapping 
$\mathscr{L}_0^2(\mu_\theta) \mapsto \mathscr{L}_0^2(\mu_\theta)$, and
since  $ \C^4_p(S) \subseteq \mathscr{L}^2(\mu_\theta)$ we have that
$\FSH \subseteq \mathscr{L}_0^2(\mu_\theta) \subseteq D_{U_\theta}$.  
In particular, the space $\FSH$ can be written as
\begin{equation}\label{eq:FSH1}
\FSH = \{f \in \C^4_p(S) : \mu_\theta(f)=0, \U_\theta(f) \in \C^2_p(S)\}.
\end{equation}

The following condition on the true parameter value $\theta_0$ is
essential to the asymptotic theory for our estimators
in Section \ref{sec:asymptotics}. Here
$\textnormal{int}(\Theta)$ denotes the interior of $\Theta$. 

\begin{cond}\label{cond:T}
The parameter space is $\Theta \subseteq \R^d$ and $\theta_0 \in
\textnormal{int}(\Theta)$. 
\end{cond}

\noindent
The notation $\mu_0=\mu_{\theta_0}$, $\Pnull=\mathbb{P}_{\theta_0}$,
etc., is applied throughout the paper. 

\subsection{Prediction-based estimating functions}
\label{subsec:predict}

Prediction-based estimating functions were proposed by
\cite{pbef-2000,pbef-2011} as a versatile framework for statistical
inference for non-Markovian diffusion-type models. In this paper, we
consider the class of estimating functions 
\begin{equation}\label{eq:pbef}
G_n(\theta) = \sum_{i=q+1}^n \sum_{j=1}^N \pi_{i-1,j}
\left[f_j(Y_i)-\breve{\pi}_{i-1,j}(\theta)\right] 
\end{equation}
where $\{f_j\}_{j=1}^N$ is a finite set of real-valued functions in
$\FSL$. For each $j \in \{1,\ldots,N\}$, $\breve{\pi}_{i-1,j}(\theta)$
denotes the orthogonal $\FSL$-projection of $f_j(Y_i)$ onto a
finite-dimensional subspace 
\begin{equation}\label{eq:pred.space}
\mathcal{P}_{i-1,j} = \text{span}\left\{1,f_j\left(Y_{i-1}\right),\ldots,f_j\left(Y_{i-q_j}\right)\right\} \subseteq \FSL,
\end{equation}
where $q_j \geq 0$. The coefficients $\pi_{i-1,j}$ in \eqref{eq:pbef}
are $d$-dimensional column vectors with entries in
$\mathcal{P}_{i-1,j}$, and $q:=\max_{1 \leq j \leq N} q_j$. \\

The subspaces $\{\mathcal{P}_{i-1,j}\}_{ij}$ are called
\emph{predictor spaces}. What we predict are values of
$f_j(Y_i)$ for $i \geq q+1$. Since every predictor space
$\mathcal{P}_{i-1,j}$ is closed, the $\FSL$-projection of $f_j(Y_i)$
onto $\mathcal{P}_{i-1,j}$, $\breve{\pi}_{i-1,j}(\theta)$, is
well-defined and uniquely determined by the normal equations 
\begin{equation}\label{eq:normal}
\ET\left(\pi\left[f_j(Y_i)-\breve{\pi}_{i-1,j}(\theta)\right]\right) = 0
\end{equation}
for all $\pi \in \mathcal{P}_{i-1,j}$. Moreover, by restricting our
attention to a stationary process $(X_t)$ and predictor spaces
of the form \eqref{eq:pred.space}, the solution to (\ref{eq:normal})
is $\breve{\pi}_{i-1,j}(\theta) = \breve{a}_n(\theta)_j^T Z_{i-1,j}$,
where 
\begin{equation*}
Z_{i-1,j}=\left(1,f_j\left(Y_{i-1}\right),\ldots,f_j\left(Y_{i-q_j}\right)\right)^T
\end{equation*}
and $\breve{a}_n(\theta)_j^T$ denotes the $(q_j+1)$-dimensional coefficient vector
\begin{equation*}
\breve{a}_n(\theta)_j^T = \left(\breve{a}_n(\theta)_{j0},\breve{a}_n(\theta)_{j1}\ldots,\breve{a}_n(\theta)_{jq_j}\right)
\end{equation*}
determined by the moment conditions
\begin{equation}\label{eq:moments}
\ET\left[Z_{q_j,j}f_j(Y_{q_j+1})\right] = \ET\left[Z_{q_j,j}Z_{q_j,j}^T\right] \breve{a}_n(\theta)_j.
\end{equation}
In the simplest case $q_j=0$, $\mathcal{P}_{i-1,j}=\text{span}\{1\}$ and, by
\eqref{eq:moments}, $\breve{\pi}_{i-1,j}(\theta) = \ET f_j(Y_1)$. \\ 

We obtain an estimator $\hat{\theta}_n$ by solving the estimating equation
$G_n(\theta)=0$,
and we call an estimator $\hat{\theta}_n$ a \emph{$G_n$-estimator} if
$\mathbb{P}_{\theta_0} (G_n(\hat{\theta}_n)=0) \to 1$ as $n \to \infty$. \\

Most prediction-based estimating functions applied in practise are of
the form considered here. In general, there is no explicit
expression for the  moments in \eqref{eq:moments}. However, as noted by
\cite{iid}, polynomial functions $f_j(y)=y^{\beta_j}$, $\beta_j \in \mathbb{N}$,
often enables calculation of the necessary moments by integrating over
mixed moments of $(X_t)$. This leads to explicit prediction-based
estimating functions for the Pearson diffusions studied in \cite{pd}.

	\section{Euler-Itô expansions}\label{sec:expansions}

This section is devoted to expansions of transformations of
diffusion processes and integrated diffusion processes observed over a
small time interval of length $\Delta_n$. We refer to these
expansions as Euler-Itô expansions. Essentially, the
following results provide a bridge between the asymptotic theory in
\cite{ped} and that of the present paper. The results are formulated
with respect to an arbitrary probability measure $\PT$. 

\subsection{Diffusion processes}

The following expansion appears in various guises in the literature on statistical inference for stochastic differential equations; see e.g.\ \cite{eed-1997}.

\begin{prop}\label{prop:XX}
Let $f \in \C^4_p(S)$. Then there exist $\F^n_i$-measurable random variables $\varepsilon_{1,i}$ and $\varepsilon_{2,i}$ such that
\begin{equation}\label{eq:XX}
f(X_{t^n_i}) = f(X_{t^n_{i-1}}) + \Delta_n^{1/2} \partial_x f(X_{t^n_{i-1}})b(X_{t^n_{i-1}};\theta)\varepsilon_{1,i}+\varepsilon_{2,i},
\end{equation}
where $\varepsilon_{1,i} \sim \N(0,1)$ and is independent of $\F^n_{i-1}$,
and $\varepsilon_{2,i}$ satisfies the moment expansions
\begin{eqnarray}
 \label{eq:exp1-XX}
\ET\left(\varepsilon_{2,i} \given \F^n_{i-1}\right) &=& \Delta_n \GT
f(X_{t^n_{i-1}}) + \Delta_n^2 R(\Delta_n,X_{t^n_{i-1}};\theta), \\
\ET\left(|\varepsilon_{2,i}|^k \given \F^n_{i-1}\right) &=& \Delta_n^k
R(\Delta_n,X_{t^n_{i-1}};\theta), \hspace{3mm} k \geq 2. \label{eq:exp2-XX}
\end{eqnarray}
\end{prop}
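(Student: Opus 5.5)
The plan is to take the Brownian increment as $\varepsilon_{1,i}$ and collect everything else in $\varepsilon_{2,i}$. Set
\[
\varepsilon_{1,i} = \Delta_n^{-1/2}\bigl(B_{t^n_i}-B_{t^n_{i-1}}\bigr),
\]
which is $\N(0,1)$. It is independent of $\F^n_{i-1}$ because $\F_t=\sigma(X_0,(B_s)_{s\le t})$, $X_0$ is independent of $(B_t)$, and Brownian increments are independent of the past. Then define
\[
\varepsilon_{2,i} := f(X_{t^n_i}) - f(X_{t^n_{i-1}}) - \Delta_n^{1/2}\partial_x f(X_{t^n_{i-1}})b(X_{t^n_{i-1}};\theta)\varepsilon_{1,i} .
\]
This makes \eqref{eq:XX} hold by construction and $\varepsilon_{2,i}$ is $\F^n_i$-measurable. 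So the whole proof consists of establishing the moment expansions. By Itô's formula (valid since $f\in\C^2_p(S)$),
\[
\varepsilon_{2,i} = \int_{t^n_{i-1}}^{t^n_i} \GT f(X_s)\dd s + \int_{t^n_{i-1}}^{t^n_i}\bigl(\partial_x f(X_s)b(X_s;\theta)-\partial_x f(X_{t^n_{i-1}})b(X_{t^n_{i-1}};\theta)\bigr)\dd B_s .
\]

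For \eqref{eq:exp1-XX}, the stochastic integral has zero conditional mean. Its integrand has polynomial growth and $X$ has conditional moments bounded polynomially in the initial value (see below), so it is a true martingale. Hence $\ET(\varepsilon_{2,i}\mid\F^n_{i-1}) = \int_0^{\Delta_n} P^\theta_s(\GT f)(X_{t^n_{i-1}})\dd s$ by the Markov property. Since $f\in\C^4_p$ and $a,b\in\C^{2,0}_p$, $g:=\GT f\in\C^2_p$. Applying Itô once more gives $P_s^\theta g(x)=g(x)+\int_0^s P_u^\theta(\GT g)(x)\dd u$, and then
\[
\int_0^{\Delta_n}P_s^\theta g(x)\dd s=\Delta_n g(x)+\Delta_n^2 R(\Delta_n,x;\theta),
\]
provided $\sup_{u\le 1}|P_u^\theta h(x)|\le_C 1+|x|^{C}$ for polynomial-growth $h$.

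That uniform moment bound is the key lemma, and it is the main technical step. I would prove it using the linear growth condition $|a|+|b|\le_C 1+|x|$. Apply Itô to $|X_t|^{2m}$ (or to $(1+X_t^2)^m$ to handle a possibly bounded state space uniformly), localize, and use Gronwall. The result is $\ET(\sup_{s\le \Delta}|X_{t+s}|^{k}\mid X_t=x)\le_C 1+|x|^k$ for $\Delta\le 1$. A standard BDG argument also gives the increment bound
\[
\ET\bigl(|X_{t+s}-X_t|^{k}\mid X_t=x\bigr)\le_C s^{k/2}(1+|x|^{C}).
\]

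For \eqref{eq:exp2-XX}, it suffices to treat even $k$, and then pass to all $k\ge2$ by Jensen/Hölder, since $\Delta_n^k R$ is preserved under taking powers $\le 1$ of conditional moments of higher order. The drift term gives $\ET|\int \GT f(X_s)\dd s|^k \le \Delta_n^{k}\sup_s \ET|\GT f(X_s)|^k=\Delta_n^k R$. For the martingale term, Burkholder–Davis–Gundy and Hölder give the bound
\[
\Delta_n^{k/2-1}\int_{t^n_{i-1}}^{t^n_i}\ET\bigl(|h(X_s)-h(X_{t^n_{i-1}})|^k\mid\F^n_{i-1}\bigr)\dd s, \qquad h=\partial_xf\, b .
\]
Here $h\in\C^1_p$ (since $f\in\C^4_p$ and $b\in\C^2_p$). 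By the mean value theorem and Cauchy–Schwarz,
\[
\ET|h(X_s)-h(X_{t^n_{i-1}})|^k\le \bigl(\ET \sup|h'|^{2k}\bigr)^{1/2}\bigl(\ET|X_s-X_{t^n_{i-1}}|^{2k}\bigr)^{1/2}\le_C (s-t^n_{i-1})^{k/2}(1+|X_{t^n_{i-1}}|^C).
\]
Integrating gives order $\Delta_n^{k/2-1}\cdot\Delta_n^{k/2+1}=\Delta_n^k$, as required. The main obstacle is therefore only the careful polynomial-in-$x$ moment control of $X$ over short intervals, together with the sup of $|h'|$ along the path, which the uniform moment lemma handles.
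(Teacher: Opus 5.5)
Your proof is correct and follows essentially the same route as the paper. The paper makes the same choice $\varepsilon_{1,i}=\Delta_n^{-1/2}(B_{t^n_i}-B_{t^n_{i-1}})$, splits $\varepsilon_{2,i}$ via It\^o into the drift integral $\int \GT f(X_s)\dd s$ plus the martingale $\int (h(X_s)-h(X_{t^n_{i-1}}))\,dB_s$, obtains the conditional mean from Fubini and a first-order expansion of $\ET(\GT f(X_{t+u})\mid\F_t)$, and bounds the higher moments with BDG, Jensen and short-time moment bounds. The only difference is that you derive the auxiliary facts the paper cites as classical (the case $k=0$ of \eqref{eq:condmomexp}, and the increment bound, which you get via the mean value theorem and Cauchy--Schwarz rather than from \eqref{eq:Esupk} directly).
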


\subsection{Integrated diffusions}

To establish a similar result for functions of the integrated
process, we rely on earlier work by \cite{edc-2000} as well as $k$'th
order Taylor expansions of functions $f \in \C^k(S)$ of the form
\begin{equation}\label{eq:taylor}
f(Y_i) = \sum_{j=0}^{k-1} \frac{1}{j!} \partial_x^j f(X_{t^n_{i-1}}) (Y_i-X_{t^n_{i-1}})^j + \frac{1}{k!} \partial_x^k f(Z^n_i) (Y_i-X_{t^n_{i-1}})^k,
\end{equation}
where $Z^n_i$ is a random variable between $X_{t^n_{i-1}}$ and $Y_i$, i.e.\
$Z^n_i = X_{t^n_{i-1}} + s(Y_i-X_{t^n_{i-1}})$ for some $s \in (0,1)$.
The following lemma provides an upper bound for the remainder term in
\eqref{eq:taylor} for a given $k \geq 1$. 

\begin{lem}\label{lem:YX-rem}
Let $h:S \to \R$ be of polynomial growth. Then, for any $k \geq 1$,
\begin{equation}\label{eq:YX-rem}
\ET\left(|h(Z^n_i)| |(Y_i-X_{t^n_{i-1}})|^k \given \F^n_{i-1}\right) \leq_{C_k} \Delta_n^{k/2} (1+|X_{t^n_{i-1}}|)^{C_k}.
\end{equation}
\end{lem}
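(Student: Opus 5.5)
The plan is to separate the factor $h(Z^n_i)$ from the increment with the conditional Cauchy--Schwarz inequality, and then bound the two resulting conditional moments one at a time. Since $h$ has polynomial growth, $|h(z)|\leq_C 1+|z|^{C}$. Because $Z^n_i$ lies between $X_{t^n_{i-1}}$ and $Y_i$, we have $|Z^n_i|\leq |X_{t^n_{i-1}}|+|Y_i|$. Moreover, $Y_i$ is an average of $X_s$ over $[t^n_{i-1},t^n_i]$, so by Jensen's inequality $|Y_i|^m \leq \Delta_n^{-1}\int_{t^n_{i-1}}^{t^n_i}|X_s|^m \dd s$. Cauchy--Schwarz then gives
\begin{equation*}
\ET\left(|h(Z^n_i)| |Y_i-X_{t^n_{i-1}}|^k \given \F^n_{i-1}\right) \leq \ET\left(h(Z^n_i)^2 \given \F^n_{i-1}\right)^{1/2}\ET\left(|Y_i-X_{t^n_{i-1}}|^{2k} \given \F^n_{i-1}\right)^{1/2}.
\end{equation*}

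\emph{Step 1: the first factor.} It suffices to show that, conditionally on $\F^n_{i-1}$, for every $m\geq 1$,
\begin{equation*}
\sup_{s\in[t^n_{i-1},t^n_i]}\ET\left(|X_s|^m \given \F^n_{i-1}\right)\leq_{C_m}(1+|X_{t^n_{i-1}}|)^m
\end{equation*}
uniformly in $n$ and $i$, say for $\Delta_n\leq 1$. This is the standard moment estimate for SDEs with linearly growing coefficients. It follows from the linear growth bound in Condition \ref{cond:X} together with Itô's formula (or Burkholder--Davis--Gundy) and Gronwall's lemma, using the Markov property at $t^n_{i-1}$; the same estimate already underlies Proposition \ref{prop:XX}. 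With the Jensen bound on $|Y_i|^m$ this yields $\ET(h(Z^n_i)^2\mid\F^n_{i-1})\leq_C(1+|X_{t^n_{i-1}}|)^{C}$.

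\emph{Step 2: the increment.} Write $Y_i-X_{t^n_{i-1}}=\Delta_n^{-1}\int_{t^n_{i-1}}^{t^n_i}(X_s-X_{t^n_{i-1}})\dd s$ and apply Jensen's inequality again:
\begin{equation*}
|Y_i-X_{t^n_{i-1}}|^{2k}\leq \Delta_n^{-1}\int_{t^n_{i-1}}^{t^n_i}|X_s-X_{t^n_{i-1}}|^{2k}\dd s .
\end{equation*}
This reduces the task to the bound $\ET(|X_s-X_{t^n_{i-1}}|^{2k}\mid\F^n_{i-1})\leq_{C_k}(s-t^n_{i-1})^{k}(1+|X_{t^n_{i-1}}|)^{C_k}$. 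To prove it, split $X_s-X_{t^n_{i-1}}$ into its drift integral and its stochastic integral. For the drift integral, Hölder's inequality gives a contribution of order $(s-t^n_{i-1})^{2k}$. For the stochastic integral, Burkholder--Davis--Gundy followed by Hölder gives $(s-t^n_{i-1})^{k-1}\int_{t^n_{i-1}}^s\ET(b(X_u)^{2k}\mid\F^n_{i-1})\dd u$. In both terms, linear growth of $a$ and $b$ together with Step 1 bounds the remaining integrands by $(1+|X_{t^n_{i-1}}|)^{2k}$. Taking square roots then gives $\Delta_n^{k/2}(1+|X_{t^n_{i-1}}|)^{C_k}$, and combining the two factors completes the proof.

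I expect the main obstacle to be Step 1, the uniform conditional moment bound. It has to be carried out carefully under only a weak solution and linear growth, without Lipschitz conditions. The usual device is to localize with stopping times and then apply Gronwall's lemma to $\ET(1+|X_{s\wedge\tau}|^{m})$, with $\Delta_n\leq1$ ensuring that the constants are uniform. This is routine and I would cite it, as is standard in this literature.
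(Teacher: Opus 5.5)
Your argument is correct, but it is organized differently from the paper's proof.

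\textbf{The paper's route.} The paper never bounds $Z^n_i$ through $Y_i$. Since $Z^n_i = X_{t^n_{i-1}} + s(Y_i - X_{t^n_{i-1}})$, it writes $|h(Z^n_i)| \leq_{C_k} 1 + |X_{t^n_{i-1}}|^{C_k} + |Y_i - X_{t^n_{i-1}}|^{C_k}$. This gives $|h(Z^n_i)||Y_i - X_{t^n_{i-1}}|^k \leq_{C_k} (1+|X_{t^n_{i-1}}|^{C_k})|Y_i - X_{t^n_{i-1}}|^k + |Y_i - X_{t^n_{i-1}}|^{k+C_k}$. Jensen's inequality then bounds each power $|Y_i - X_{t^n_{i-1}}|^m$ by $\sup_{u \in [0,\Delta_n]} |X_{t^n_{i-1}+u} - X_{t^n_{i-1}}|^m$. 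The cited classical estimate \eqref{eq:Esupk} with $f(x)=x$, together with $\Delta_n^{m/2} \leq \Delta_n^{k/2}$ for $\Delta_n \leq 1$, finishes the proof in a few lines.

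So the paper needs no Cauchy--Schwarz step and no separate moment bound for $Y_i$: the growth of $h$ is absorbed into increments. The conditional moment bound you single out as the main obstacle is hidden inside the citation of \eqref{eq:Esupk}.

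\textbf{Your route.} You bound $|Z^n_i| \leq |X_{t^n_{i-1}}| + |Y_i|$ and decouple the two factors by conditional Cauchy--Schwarz. You then derive both a conditional moment bound for $X_s$ and the increment bound from BDG, H\"older and Gronwall. In effect you re-prove the pointwise-in-$s$ version of \eqref{eq:Esupk}.

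This buys you two things:
\begin{itemize}
\item The argument is self-contained.
\item Because you keep the time integral after Jensen instead of passing to the supremum, you only need $\sup_s \ET(|X_s - X_{t^n_{i-1}}|^{2k} \mid \F^n_{i-1})$, not a moment of the supremum.
\end{itemize}
The price is the localization/Gronwall argument.

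That argument does not need the Markov property. Let $\tau_N$ be the exit time from $[-N,N]$ after $t^n_{i-1}$. Run Gronwall on $\ET(1_A (1+X_{s\wedge\tau_N}^2)^{m/2})$ for every $A \in \F^n_{i-1}$, using the linear growth of $a$ and $b$. Letting $N \to \infty$ via Fatou then gives the conditional bound directly for the weak solution in Condition~\ref{cond:X}.
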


\noindent
In particular, if $f \in \C^1_p(S)$,
$f(Y_i) = f(X_{t^n_{i-1}}) + \partial_x f(Z^n_i) (Y_i-X_{t^n_{i-1}}),$
and Lemma \ref{lem:YX-rem} implies that
\begin{equation}\label{eq:YX-bound}
\ET\left(|f(Y_i)-f(X_{t^n_{i-1}})|^k \given \F^n_{i-1}\right) \leq_{C_k} \Delta_n^{k/2} (1+|X_{t^n_{i-1}}|)^{C_k}.
\end{equation}

\vspace*{0.2cm}

Our main result in this section is of independent interest. It is a
generalization of Proposition~2.2 in \cite{edc-2000}. Note the
resemblance with Proposition \ref{prop:XX}.  

\begin{prop}\label{prop:YX}
Let $f \in \C^4_p(S)$. Then there exist $\F^n_i$-measurable random variables $\xi_{1,i}$ and $\xi_{2,i}$ such that
\begin{equation}\label{eq:YX}
f(Y_i) = f(X_{t^n_{i-1}}) + \Delta_n^{1/2} \partial_x f(X_{t^n_{i-1}})b(X_{t^n_{i-1}};\theta)\xi_{1,i}+\xi_{2,i},
\end{equation}
where $\xi_{1,i} \sim \N(0,1/3)$ and is independent of $\F^n_{i-1}$,
and $\xi_{2,i}$ satisfies the moment expansions

\begin{eqnarray}
  \label{eq:exp1-YX}
\ET\left(\xi_{2,i} \given \F^n_{i-1}\right) &=& \Delta_n \HT
f(X_{t^n_{i-1}}) +\Delta_n^{3/2} R(\Delta_n,X_{t^n_{i-1}};\theta), \\
\ET\left(\xi^2_{2,i} \given \F^n_{i-1}\right) &=& \Delta_n^2
R(\Delta_n,X_{t^n_{i-1}};\theta), \label{eq:exp2-YX}
\end{eqnarray}
with
\begin{equation}\label{eq:HT}
\HT f(x) = \frac{1}{2} \GT f(x) - \frac{1}{12} b^2(x;\theta)\partial^2_x f(x).
\end{equation}
Moreover,
\begin{equation}\label{eq:isometry}
\ET\left(\varepsilon_{1,i} \cdot \xi_{1,i}\right) = \frac{1}{2},
\end{equation}
where $\varepsilon_{1,i}$ is the random variable that appears in
the Euler-Itô expansion \eqref{eq:XX}. 
\end{prop}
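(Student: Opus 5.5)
We will prove Proposition \ref{prop:YX} by first expanding $Y_i - X_{t^n_{i-1}}$ itself, and then inserting that expansion into the Taylor formula \eqref{eq:taylor} with $k=4$.

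\textbf{Expansion of $Y_i$.} Write $t=t^n_{i-1}$ and $X_s = X_t + \int_t^s a(X_u;\theta)du + \int_t^s b(X_u;\theta)dB_u$. Integrating over $s\in[t,t+\Delta_n]$ and applying stochastic Fubini gives
\begin{equation*}
Y_i - X_t = \Delta_n^{-1}\int_t^{t+\Delta_n}(t+\Delta_n-u)\,a(X_u;\theta)\,du + \Delta_n^{-1}\int_t^{t+\Delta_n}(t+\Delta_n-u)\,b(X_u;\theta)\,dB_u .
\end{equation*}
We then freeze the coefficient $b$ at $X_t$ and define
\begin{equation*}
\xi_{1,i} = \Delta_n^{-3/2}\int_t^{t+\Delta_n}(t+\Delta_n-u)\,dB_u .
\end{equation*}
This variable is Gaussian, independent of $\F^n_{i-1}$, and has variance $\Delta_n^{-3}\int_0^{\Delta_n}v^2dv = 1/3$. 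In Proposition \ref{prop:XX} the Euler term is $\varepsilon_{1,i} = \Delta_n^{-1/2}(B_{t+\Delta_n}-B_t)$; we take this from the standard proof of that proposition. The Itô isometry then gives
\begin{equation*}
\ET(\varepsilon_{1,i}\xi_{1,i}) = \Delta_n^{-2}\int_0^{\Delta_n}v\,dv = 1/2,
\end{equation*}
which is \eqref{eq:isometry}. The remainder is
\begin{equation*}
\rho_i := Y_i - X_t - \Delta_n^{1/2}b(X_t;\theta)\xi_{1,i} = \Delta_n^{-1}\int (t+\Delta_n-u)\,a(X_u)\,du + \Delta_n^{-1}\int (t+\Delta_n-u)\bigl(b(X_u)-b(X_t)\bigr)dB_u .
\end{equation*}
We bound it using the polynomial moment bounds for $X$, which follow from linear growth: $\ET(|X_u-X_t|^k\mid\F_t)\le_C (u-t)^{k/2}(1+|X_t|)^C$. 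With Burkholder's inequality this yields $\ET(|\rho_i|^k\mid\F^n_{i-1})\le \Delta_n^{k}R$. For the conditional mean, the stochastic integral has zero mean, and expanding $a(X_u)=a(X_t)+O(|X_u-X_t|)$ gives $\ET(\rho_i\mid\F^n_{i-1}) = \tfrac12\Delta_n a(X_t) + \Delta_n^{3/2}R$.

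\textbf{Second moment of $Y_i-X_t$.} Next we need
\begin{equation*}
\ET\bigl((Y_i-X_t)^2\mid\F^n_{i-1}\bigr) = \Delta_n b^2(X_t)/3 + \Delta_n^{3/2}R .
\end{equation*}
This follows from the previous step, since the cross term $\Delta_n^{1/2}b\,\ET(\xi_{1,i}\rho_i\mid\cdot)$ is bounded by $\Delta_n^{3/2}R$ via Cauchy–Schwarz, and $\ET(\rho_i^2\mid\cdot)\le\Delta_n^2 R$.

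\textbf{Taylor step.} Apply \eqref{eq:taylor} with $k=2$ (or higher) and set
\begin{equation*}
\xi_{2,i} = f(Y_i)-f(X_t)-\Delta_n^{1/2}f'(X_t)b(X_t)\xi_{1,i} = f'(X_t)\rho_i + \tfrac12 f''(X_t)(Y_i-X_t)^2 + \tfrac16 f'''(Z^n_i)(Y_i-X_t)^3 .
\end{equation*}
Taking conditional expectations, and using Lemma \ref{lem:YX-rem} for the cubic term (bounded by $\Delta_n^{3/2}$), we obtain
\begin{equation*}
\ET(\xi_{2,i}\mid\F^n_{i-1}) = \Delta_n\bigl(\tfrac12 a f' + \tfrac16 b^2 f''\bigr) + \Delta_n^{3/2}R .
\end{equation*}
Since $\tfrac12 af' + \tfrac16 b^2f'' = \tfrac12\GT f - \tfrac1{12}b^2f''$, this is exactly $\Delta_n\HT f$, giving \eqref{eq:exp1-YX}. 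For \eqref{eq:exp2-YX}, each of the three terms in $\xi_{2,i}$ has conditional second moment at most $\Delta_n^2R$, by the $\rho_i$ bound and Lemma \ref{lem:YX-rem} with $k=4$ and $k=6$.

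\textbf{Main obstacle.} The delicate step is the conditional-mean remainder. We must make sure it is genuinely $\Delta_n^{3/2}$ rather than $\Delta_n$. This requires the second moment of $Y_i-X_t$ to be accurate to order $\Delta_n^{3/2}$, and the drift replacement $a(X_u)\to a(X_t)$ to be controlled, which we handle by Cauchy–Schwarz with polynomial-growth moment bounds. Throughout, all constants must be kept of polynomial growth in $X_t$ so that they fit the generic form $R$ in \eqref{eq:R}.
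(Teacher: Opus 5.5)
Your proposal is correct and is essentially the paper's proof. It uses the same Gaussian $\xi_{1,i}=\Delta_n^{-3/2}\int_{t^n_{i-1}}^{t^n_i}(t^n_i-u)\,dB_u$ and the same remainder $\rho_i$ (the paper's $\xi^*_{2,i}$) for the identity map. It then applies the third-order Taylor expansion, with Lemma \ref{lem:YX-rem} handling the cubic term; note that your formula is the $k=3$ case of \eqref{eq:taylor}, not $k=2$ or $k=4$ as you label it. The remaining differences are cosmetic: you use stochastic Fubini instead of Lemma \ref{lemx:ibp}; you control the drift replacement by a Lipschitz/moment bound, which gives a $\Delta_n^{3/2}$ rather than the paper's $\Delta_n^2$ remainder for $\ET(\rho_i\mid\F^n_{i-1})$ and is enough here; and you bound $\ET(\xi_{2,i}^2\mid\F^n_{i-1})$ directly from the three-term decomposition rather than via a second, lower-order Taylor expansion.
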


	\section{Limit theory for integrated diffusions}\label{sec:limits}

As an application of the Euler-Itô expansion \eqref{eq:YX} and the
corresponding bound \eqref{eq:YX-bound}, we derive in this section a
law of large numbers and a central limit theorem for a class of
functionals of integrated diffusions
\begin{equation}\label{eq:functional}
\frac{1}{n} \sum_{i=1}^n f(Y_i),
\end{equation}
where $f:S \to \R$ satisfies appropriate regularity conditions. For
the remainder of the paper, all asymptotic results are obtained
under the true probability measure $\Pnull$ and under the asymptotic
scenario (\ref{eq:HF}). 

\begin{lem}\label{lem:LLN}
Suppose that $f \in \C^1_p(S)$ and that $(X_t)$ satisfies Condition \ref{cond:X}. Then,
\begin{equation*}
\frac{1}{n} \sum_{i=1}^n f(Y_i) \cPnull \mu_0(f).
\end{equation*}
\end{lem}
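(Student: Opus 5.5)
We split the average into a discretized diffusion part and an approximation error:
\begin{equation*}
\frac{1}{n}\sum_{i=1}^n f(Y_i) = \frac{1}{n}\sum_{i=1}^n f(X_{t^n_{i-1}}) + \frac{1}{n}\sum_{i=1}^n \left[f(Y_i)-f(X_{t^n_{i-1}})\right].
\end{equation*}
We show that the first term converges in $\Pnull$-probability to $\mu_0(f)$ and that the second term converges to zero in $L^1(\Pnull)$.

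For the error term we use the bound \eqref{eq:YX-bound} with $k=1$, which is available because $f\in\C^1_p(S)$. Taking expectations and using stationarity gives
\begin{equation*}
\Enull\abs{\frac{1}{n}\sum_{i=1}^n \left[f(Y_i)-f(X_{t^n_{i-1}})\right]} \leq_C \Delta_n^{1/2}\,\Enull\left[(1+|X_0|)^{C}\right].
\end{equation*}
The expectation on the right is finite, since $\mu_0$ has moments of all orders by Condition \ref{cond:X}. The whole bound therefore tends to zero as $\Delta_n\to 0$.

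For the main term we use a variance argument based on $\rho$-mixing. Put $g=f-\mu_0(f)\in\mathscr{L}_0^2(\mu_0)$; this holds because $f$ has polynomial growth and $\mu_0$ has all moments. By stationarity,
\begin{equation*}
\Enull\left(\frac{1}{n}\sum_{i=1}^n g(X_{t^n_{i-1}})\right)^2 = \frac{1}{n^2}\sum_{i,j}\Enull\left[g(X_0)\,P^{\theta_0}_{|i-j|\Delta_n}g(X_0)\right].
\end{equation*}
Each covariance is bounded in absolute value by $e^{-\lambda|i-j|\Delta_n}\norm{g}_2^2$, using Cauchy--Schwarz and the spectral gap bound $\norm{P_t^{\theta_0}g}_2\leq e^{-\lambda t}\norm{g}_2$ quoted after Condition \ref{cond:X}. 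Summing over $j$ for fixed $i$ gives at most
\begin{equation*}
\norm{g}_2^2\sum_{m\in\mathbb{Z}}e^{-\lambda|m|\Delta_n}\leq_C \norm{g}_2^2\left(1+\frac{2}{\lambda\Delta_n}\right).
\end{equation*}
Hence the second moment is $O\bigl(1/n + 1/(n\Delta_n)\bigr)$, which tends to zero because $n\Delta_n\to\infty$. Chebyshev's inequality then yields the required convergence in probability.

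The only step needing care is this variance bound. It is where the infinite-horizon condition $n\Delta_n\to\infty$ enters: the effective number of independent observations is of order $n\Delta_n$ rather than $n$. Combining the two terms completes the proof.
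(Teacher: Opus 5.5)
Your proof is correct and follows the paper's route: the same split into $\frac{1}{n}\sum_{i=1}^n f(X_{t^n_{i-1}})$ plus an error term controlled by the bound \eqref{eq:YX-bound}. The only differences are minor. You prove the law of large numbers for the discretely sampled diffusion yourself via the spectral-gap variance estimate, where the paper cites Lemma~3.1 in \cite{ped}. You also dispose of the error term with a direct $L^1$ bound using stationarity rather than through Lemma~9 in \cite{edc-1993}; both changes are valid and make the argument self-contained.
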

\noindent
The result of Lemma \ref{lem:LLN} appears in a slightly stronger version in
Proposition~2 of \cite{id-2006}. \\

The result of the following lemma is that a central limit theorem for
functionals (\ref {eq:functional}) of integrated diffusions can be
obtained under the same assumption on the rate of convergence of
$\Delta_n$ and with the same Gaussian 
limit distribution as for similar functionals of discretely observed
diffusion processes; see Proposition 3.4 in \cite{ped}.

\begin{lem}\label{lem:CLT}
Assume that $f \in \mathscr{H}_0$ and that $(X_t)$ satisfies
Condition \ref{cond:X}. If $n\Delta_n^3 \to 0$, then
\begin{equation*}
\sqrt{n\Delta_n} \left(\frac{1}{n} \sum_{i=1}^n f(Y_i)\right) \cDnull \N\left(0,\V_0(f)\right),
\end{equation*}
where
\begin{equation}\label{eq:CLT.AVAR}
\V_0(f) = \mu_0\left([\partial_x U_0(f) b(\ccs;\theta_0)]^2\right) = 2\mu_0\left(f \U_0 (f)\right).
\end{equation}
\end{lem}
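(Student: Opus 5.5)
Since $f \in \mathscr{H}_0$, the plan is to compare $\frac{1}{n}\sum_{i=1}^n f(Y_i)$ with the corresponding sum for the discretely observed diffusion, $\frac{1}{n}\sum_{i=1}^n f(X_{t^n_{i-1}})$, and then apply Proposition 3.4 in \cite{ped}. Under Condition \ref{cond:X} and $n\Delta_n^3 \to 0$, that proposition gives
\[
\sqrt{n\Delta_n}\,\frac{1}{n}\sum_{i=1}^n f(X_{t^n_{i-1}}) \cDnull \N(0,\V_0(f)),
\]
with $\V_0(f) = \mu_0([\partial_x U_0(f) b]^2) = 2\mu_0(f U_0(f))$. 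By Slutsky's lemma, the statement will then follow once we show that
\[
\sqrt{n\Delta_n}\,\frac{1}{n}\sum_{i=1}^n \bigl(f(Y_i) - f(X_{t^n_{i-1}})\bigr) \cPnull 0.
\]

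To control this difference, apply Proposition \ref{prop:YX} with $\theta = \theta_0$. This writes $f(Y_i) - f(X_{t^n_{i-1}})$ as a sum of three pieces:
\begin{itemize}
\item[(i)] the martingale-difference term $\Delta_n^{1/2}\partial_x f(X_{t^n_{i-1}}) b(X_{t^n_{i-1}};\theta_0)\xi_{1,i}$;
\item[(ii)] the centred term $\xi_{2,i} - \Enull(\xi_{2,i}\mid\F^n_{i-1})$;
\item[(iii)] the conditional mean $\Delta_n \mathcal{H}_0 f(X_{t^n_{i-1}}) + \Delta_n^{3/2}R$.
\end{itemize}

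Terms (i) and (ii) are martingale differences with respect to $(\F^n_i)$. Using stationarity, polynomial growth and the moment condition on $\mu_0$, their conditional second moments are $O(\Delta_n)$ and $O(\Delta_n^2)$ respectively. Hence the normalized sums have second moments of order $(n\Delta_n/n^2)\cdot n\Delta_n = \Delta_n^2 \to 0$ and $\Delta_n^3 \to 0$, and Chebyshev's inequality disposes of them. For (iii), the remainder $\sqrt{n\Delta_n}\,\Delta_n^{3/2}\frac1n\sum R$ has $L^1$ norm of order $\sqrt{n\Delta_n^4}$, which tends to $0$ because $n\Delta_n^3 \to 0$ and $\Delta_n \to 0$.

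The main obstacle is the leading drift term
\[
\sqrt{n\Delta_n}\,\Delta_n\,\frac1n\sum_{i=1}^n \mathcal{H}_0 f(X_{t^n_{i-1}}).
\]
A crude bound only gives order $\sqrt{n\Delta_n^3}$, which tends to $0$, so the assumption $n\Delta_n^3 \to 0$ already suffices. Still, care is needed to avoid demanding more. I would first try exactly this crude $L^1$ bound: $\mathcal{H}_0 f$ is of polynomial growth since $f \in \C^4_p$ and $a,b \in \C^{2,0}_p$, so its stationary expectation is finite. This gives order $\sqrt{n}\,\Delta_n^{3/2} \to 0$.

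Should a sharper version be wanted, one could center $\mathcal{H}_0 f$ by $\mu_0(\mathcal{H}_0 f)$ and use $\rho$-mixing, but that is unnecessary here. A final check is that the approximation in \cite{ped} sums over $i=1,\dots,n$ evaluated at $X_{t^n_{i-1}}$; any index shift changes only a boundary term of order $\sqrt{\Delta_n/n}$, which is negligible.
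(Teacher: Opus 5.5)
Your proposal is correct and follows the paper's proof. Both reduce to Proposition~3.4 in \cite{ped} by showing $\sqrt{n\Delta_n}\,\frac{1}{n}\sum_{i=1}^n [f(Y_i)-f(X_{t^n_{i-1}})] \cPnull 0$, using Proposition~\ref{prop:YX} to bound the conditional means by a term of order $\sqrt{n\Delta_n^3}$ and second moments to control the fluctuations. The only difference is cosmetic: the paper invokes Lemma~9 of \cite{edc-1993} together with the conditional bound \eqref{eq:YX-bound}, whereas you split off the martingale parts explicitly and use orthogonality, stationarity and Chebyshev's inequality, which amounts to the same estimates.
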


\vspace*{0.2cm}

The operator $U_0(f)$ appearing in the asymptotic variance
\eqref{eq:CLT.AVAR} is the potential, which was defined and discussed
in Subsections \ref{subsec:notation} and \ref{subsec:model}.

	\section{Asymptotic theory}\label{sec:asymptotics}

This section contains our main asymptotic results on $G_n$-estimators
obtained from prediction-based estimating functions of the type
described in Subsection 
\ref{subsec:predict}. The proofs are based on general asymptotic
theory for estimating functions in \cite{jjms}; see also \cite{sm}. We confine the
discussion to estimating functions of the form \eqref{eq:pbef} where
$N=1$ and simplify the notation by writing
\begin{equation}\label{eq:G}
G_n(\theta) = \sum_{i=q+1}^n \pi_{i-1} \left[f(Y_i)-\breve{\pi}_{i-1}(\theta)\right],
\end{equation}
$\mathcal{P}_{i-1}$ for the corresponding predictor spaces and so on
for objects in Subsection \ref{subsec:predict} that depend on $j$. The
extension to estimating functions with multiple predictor functions
$\{f_j\}_{j=1}^N$ is discussed in Section~4.3 in \cite{ped}. 

\subsection{Simple predictor spaces}\label{ssec:asymptotics.s}

The simplest class of estimating functions of the form \eqref{eq:G}
occurs for $q=0$. In this case, the orthogonal projection is
$\breve{\pi}_{i-1}(\theta) = \ET f(Y_1)$, and the one-dimensional
predictor space $\mathcal{P}_{i-1}$ allows us to estimate one real
parameter $\theta \in \Theta \subseteq \R$. Therefore, we consider the
one-dimensional estimating function
\begin{equation}\label{eq:PBEF.s}
G_n(\theta) = \sum_{i=1}^n \left[f(Y_i)-\ET f(Y_1)\right].
\end{equation}
Similar estimating functions were studied for discretely observed
diffusions by \cite{sef}.

Our study of the asymptotic properties of $G_n$-estimators is based on
expansions of $G_n$ in powers of $\Delta_n$. In the simple case
considered here, such an expansion follows easily from (\ref{eq:YX})
in Proposition \ref{prop:YX}, which implies that for any $f \in \C^4_p(S)$
\begin{equation}\label{eq:G.s}
\ET f(Y_1) = \mu_\theta(f) + \ET(\xi_{2,1}) = \mu_\theta(f) + \Delta_n R(\Delta_n;\theta),
\end{equation}
where $|R(\Delta_n;\theta)| \leq C(\theta)<\infty$. \\

The following regularity conditions on $G_n$ plus standard
identifiability and rate conditions ensure existence, consistency and
asymptotic normality of $G_n$-estimators. 

\begin{cond}\label{cond:G.s}
Suppose that
\begin{itemize}
\myitem $f^*(x):=f(x)-\mu_0(f) \in \mathscr{H}_0$,
\myitem $\theta \mapsto \mu_\theta(f) \in \C^1$,
\myitem For any compact subset $\M \subseteq \Theta$ and for
$\Delta_n$ sufficiently small, 
\begin{equation}
\sup_{\theta \in \M} |\partial_\theta R(\Delta_n;\theta)| \leq C(\M).
\end{equation}
\end{itemize}
\end{cond}

\vspace*{0.1cm}

\begin{thm}\label{thm:PBEF.s}
Assume Conditions \ref{cond:X}, \ref{cond:T} and \ref{cond:G.s} and
the identifiability condition $\partial_{\theta} \mu_\theta(f) \neq 0$
for all $\theta \in \Theta$. Then the following assertions hold.
\begin{itemize}
\myitem There exists a consistent sequence of $G_n$-estimators
$(\hat{\theta}_n)$ which, as $n \to \infty$, is unique in any compact
subset $\K \subseteq \Theta$ containing $\theta_0$ with
$\Pnull$-probability approaching one. 
\myitem If, moreover, $n\Delta_n^3 \to 0$, then
\begin{equation}\label{eq:AN.s}
\sqrt{n\Delta_n}\left(\hat{\theta}_n-\theta_0\right) \cDnull
\mathcal{N}\left(0,\left[\partial_{\theta}\mu_0(f)\right]^{-2}
  V_0(f)\right), 
\end{equation}
where $V_0(f) = 2 \mu_0(f^*\U_0(f^*))$.
\end{itemize}
\end{thm}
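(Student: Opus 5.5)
We would apply the general asymptotic theory for estimating functions in \cite{jjms} to the normalized function $\tilde G_n(\theta) = n^{-1} G_n(\theta)$. That theorem gives existence, consistency and eventual uniqueness on compacts once three things hold. First, $\tilde G_n(\theta_0) \cPnull 0$. Second, the derivative $\partial_\theta \tilde G_n(\theta)$ converges in probability, uniformly on a neighbourhood (or compact) of $\theta_0$, to a deterministic continuous function $W(\theta)$ with $W(\theta_0) \neq 0$. Third, the identifiability condition gives uniqueness on compacts $\K$. Asymptotic normality then follows from the usual linearization once we have a CLT for $\sqrt{n\Delta_n}\,\tilde G_n(\theta_0)$.

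The first step is to compute everything from \eqref{eq:G.s}. We have
\[
\tilde G_n(\theta) = \frac1n\sum_{i=1}^n f(Y_i) - \mu_\theta(f) - \Delta_n R(\Delta_n;\theta),
\]
so $\partial_\theta \tilde G_n(\theta) = -\partial_\theta \mu_\theta(f) - \Delta_n \partial_\theta R(\Delta_n;\theta)$, which is deterministic. By Condition \ref{cond:G.s}, $\sup_{\theta\in\M}|\Delta_n\partial_\theta R(\Delta_n;\theta)| \le C(\M)\Delta_n \to 0$. Hence $\partial_\theta\tilde G_n(\theta) \to W(\theta) := -\partial_\theta\mu_\theta(f)$ uniformly on compacts. $W$ is continuous because $\theta\mapsto\mu_\theta(f)\in\C^1$, and $W(\theta_0)\ne0$ by assumption. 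For $\tilde G_n(\theta_0)$ we apply Lemma \ref{lem:LLN} to $f$; since $f^*\in\mathscr H_0\subseteq\C^1_p(S)$, we have $f\in\C^1_p$. This gives $n^{-1}\sum f(Y_i)\cPnull\mu_0(f)$, and the remaining term $\Delta_n R(\Delta_n;\theta_0)\to0$, so $\tilde G_n(\theta_0)\cPnull0$.

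Next come consistency and uniqueness. The limit $\gamma(\theta)=\mu_0(f)-\mu_\theta(f)$ of $\tilde G_n(\theta)$ converges uniformly on compacts, since the difference is $n^{-1}\sum f(Y_i)-\mu_0(f)$ (independent of $\theta$) plus an $O(\Delta_n)$ term uniform on $\M$. We have $\gamma(\theta_0)=0$. Because $\partial_\theta\mu_\theta(f)\neq0$ everywhere on the (interval) parameter set, $\theta\mapsto\mu_\theta(f)$ is strictly monotone, so $\theta_0$ is the unique zero of $\gamma$ on any compact $\K\ni\theta_0$. This is exactly the identifiability hypothesis of the theorem in \cite{jjms}. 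That theorem (Theorem 2.5 there, or its uniform-convergence version) yields a consistent $G_n$-estimator, eventually unique in $\K$.

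For asymptotic normality, the mean value theorem gives
\[
0=\tilde G_n(\hat\theta_n)=\tilde G_n(\theta_0)+\partial_\theta\tilde G_n(\bar\theta_n)(\hat\theta_n-\theta_0),
\]
and $\partial_\theta\tilde G_n(\bar\theta_n)\to W(\theta_0)$ by uniform convergence and consistency. Moreover,
\[
\sqrt{n\Delta_n}\,\tilde G_n(\theta_0)=\sqrt{n\Delta_n}\,\frac1n\sum f^*(Y_i)-\sqrt{n\Delta_n}\,\Delta_nR(\Delta_n;\theta_0).
\]
The first term converges to $\N(0,V_0(f^*))$ by Lemma \ref{lem:CLT}, using $n\Delta_n^3\to0$. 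The bias term is $O(\sqrt{n\Delta_n^3})\to0$. Slutsky's lemma then gives \eqref{eq:AN.s} with variance $[\partial_\theta\mu_0(f)]^{-2}V_0(f)$.

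The main obstacle is the bias term: it must be exactly of order $\Delta_n$ with $R$ bounded at $\theta_0$. This is where \eqref{eq:G.s} from Proposition \ref{prop:YX} is essential, since it uses stationarity so that $\ET\xi_{2,1}=\Delta_n\mu_\theta(\HT f)+O(\Delta_n^{3/2})$. The other delicate point is matching the hypotheses of \cite{jjms} on a general $\Theta$, which requires uniformity of the $R$-derivative bound on compacts; Condition \ref{cond:G.s} is designed for exactly this.
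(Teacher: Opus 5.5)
Your proposal is correct and follows the paper's proof essentially step for step. The common chain is: the expansion \eqref{eq:G.s}; Lemma \ref{lem:LLN} for the limit $(\mu_0-\mu_\theta)(f)$; uniform $O(\Delta_n)$ control of the deterministic derivative via Condition \ref{cond:G.s}; Theorems 2.5 and 2.7 of \cite{jjms} for existence and eventual uniqueness; and Lemma \ref{lem:CLT} together with the $\sqrt{n\Delta_n^3}$ bias bound and a Taylor expansion for asymptotic normality. Your remark that injectivity of $\theta\mapsto\mu_\theta(f)$ needs $\Theta$ to be an interval makes explicit an assumption the paper uses only implicitly.
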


Specifically, the statement about uniqueness means that for any $G_n$-estimator
$\tilde \theta_n$ for which $\Pnull(\tilde \theta_n \in \K) \to
1$, it holds that $\Pnull(\hat{\theta}_n \neq \tilde \theta_n) \to 0$.

The identifiability condition and the assumption about the rate of
convergence of $\Delta_n$ are exactly as in the similar result for
prediction based estimating functions for discrete time
observations of diffusion processes in \cite{ped}. Also the Gaussian
the limit distribution is the same, which enables us to
use the Monte Carlo method to calculate the asymptotic variance
developed in Section~5.1 of \cite{ped}. Importantly, this method does not require
an expression for the potential.

\subsection{1-lag predictor spaces}\label{ssec:asymptotics.1}

The introduction of functions of past observations in the predictor
space $\mathcal{P}_{i-1}$ increases the mathematical complexity
considerably. Our main result establishes existence, uniqueness, consistency and
asymptotically normality for prediction-based
$G_n$-estimators with $q=1$ under appropriate regularity
conditions. In this case, the predictor space $\mathcal{P}_{i-1}$ is
spanned by $1$ and $f(Y_{i-1})$, and it follows from the normal
equations \eqref{eq:moments} that the optimal predictor is
\begin{equation*}
\breve{\pi}_{i-1}(\theta) = \breve{a}_n(\theta)_0 + \breve{a}_n(\theta)_1 f(Y_{i-1}),
\end{equation*}
where $\breve{a}_n(\theta)_0$ and $\breve{a}_n(\theta)_1$ are uniquely
determined by
\begin{eqnarray}
\label{eqn:a0} \breve{a}_n(\theta)_0 &=& \ET f(Y_1) \left(1-\breve{a}_n(\theta)_1\right), \\
\label{eqn:a1} \breve{a}_n(\theta)_1 &=& \frac{\ET\left[f(Y_1)f(Y_2)\right]-[\ET f(Y_1)]^2}{\VT f(Y_1)}.
\end{eqnarray}
Consistent with the two-dimensional predictor space, we consider $d=2$
and investigate the estimating function 
\begin{equation}\label{eq:PBEF.1}
G_n(\theta) = \sum_{i=2}^n \vtwo{1}{f(Y_{i-1})} \left[f(Y_i)-\breve{a}_n(\theta)_0-\breve{a}_n(\theta)_1f(Y_{i-1})\right]
\end{equation}
for which the expansion in powers of $\Delta_n$ is more difficult than
for \eqref{eq:PBEF.s}.

Using on the Euler-Itô expansions in Section \ref{sec:expansions}, we
start by expanding the projection coefficients $\breve{a}_n(\theta)_0$
and $\breve{a}_n(\theta)_1$. As the proof is a bit long, we formulate
the result in a separate lemma. 

\begin{lem}\label{lem:a}
For $f \in \C^4_p(S)$, the projection coefficient vector
$\breve{a}_n(\theta)=\left(\breve{a}_n(\theta)_0,\breve{a}_n(\theta)_1\right)^T$
has the expansion 
\begin{equation}\label{eq:a}
\breve{a}_n(\theta) = \vtwo{0}{1} + \Delta_n \vtwo{-K_f(\theta)\mu_\theta(f)}{K_f(\theta)} + \Delta_n^{3/2} R(\Delta_n;\theta)
\end{equation}
where $|R(\Delta_n;\theta)| \leq C(\theta)$ and
\begin{equation}\label{eq:K}
K_f(\theta) = \VT f(X_0)^{-1}\left[\mu_\theta(f \GT f) + \frac{1}{6} \mu_\theta\left([b(\ccs;\theta)\partial_x f]^2\right)\right].
\end{equation}
\end{lem}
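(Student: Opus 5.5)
Since $\breve{a}_n(\theta)_0 = \ET f(Y_1)(1-\breve{a}_n(\theta)_1)$ by \eqref{eqn:a0}, and $\ET f(Y_1)=\mu_\theta(f)+O(\Delta_n)$ by \eqref{eq:G.s}, the first coordinate follows at once from the second: if $1-\breve a_n(\theta)_1 = -\Delta_n K_f(\theta)+O(\Delta_n^{3/2})$, then $\breve a_n(\theta)_0 = -\Delta_n K_f(\theta)\mu_\theta(f) + O(\Delta_n^{3/2})$. The main work is the expansion of $\breve a_n(\theta)_1$ in \eqref{eqn:a1}. I would write
\[
1-\breve a_n(\theta)_1 = \frac{\VT f(Y_1) - \mathrm{Cov}_\theta(f(Y_1),f(Y_2))}{\VT f(Y_1)} = \frac{\tfrac12\ET\big[(f(Y_2)-f(Y_1))^2\big]}{\VT f(Y_1)},
\]
using stationarity of $(Y_i)$. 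This identity is the key, since it turns the problem into computing a second moment of an increment to order $\Delta_n^{2}$, with an $O(\Delta_n^{5/2})$ remainder. The denominator is easy: by \eqref{eq:YX-bound}, $\VT f(Y_1)=\VT f(X_0)+O(\Delta_n^{1/2})$, which suffices for the claimed error because the numerator is $O(\Delta_n)$. I would in fact try to get $O(\Delta_n)$ here via \eqref{eq:YX}, but $O(\Delta_n^{1/2})$ already gives an $\Delta_n^{3/2}$ remainder.

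For the numerator, I would decompose
\[
f(Y_2)-f(Y_1) = [f(Y_2)-f(X_{t_1})] + [f(X_{t_1})-f(X_{t_0})] - [f(Y_1)-f(X_{t_0})],
\]
and apply Proposition~\ref{prop:YX} to the first and third brackets (with $i=2,1$) and Proposition~\ref{prop:XX} to the middle one. The leading Gaussian parts are then $\Delta_n^{1/2}$ times $g(X_{t_1})\xi_{1,2}+g(X_{t_0})(\varepsilon_{1,1}-\xi_{1,1})$, where $g=b\,\partial_x f$. Their second moment is
\[
\Delta_n\Big(\mu_\theta(g^2)\big[\tfrac13+1-2\cdot\tfrac12+\tfrac13\big]\Big)=\tfrac23\Delta_n\mu_\theta(g^2),
\]
where I use \eqref{eq:isometry}, the independence of $\xi_{1,2}$ from $\F_{t_1}$, and the fact that replacing $g(X_{t_1})$ by $g(X_{t_0})$ costs only an extra $\Delta_n^{1/2}$. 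Hence the second moment is $\Delta_n\cdot\frac23\mu_\theta(g^2)$ plus higher order terms, and this would give $K\approx \frac13\mu(g^2)/\mathrm{Var}$. That is wrong order bookkeeping: $1-\breve a_1$ should be $O(\Delta_n)$, and this is indeed $O(\Delta_n)$. Now I check it against \eqref{eq:K}. By the Poisson/integration-by-parts identity $\mu_\theta(f\GT f)=-\tfrac12\mu_\theta(g^2)$, we get $\mu(f\GT f)+\tfrac16\mu(g^2) = -\tfrac13\mu(g^2)$. So $K_f=-\tfrac13\mu(g^2)/\mathrm{Var}$, and $1-\breve a_1=-\Delta_n K_f$ is consistent.

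The remaining step, which I expect to be the main obstacle, is showing that everything else in $\tfrac12\ET[(f(Y_2)-f(Y_1))^2]$ is $O(\Delta_n^{3/2})$. This means controlling the cross terms between the Gaussian parts and the $\xi_2,\varepsilon_2$ terms via Cauchy–Schwarz with \eqref{eq:exp2-XX} and \eqref{eq:exp2-YX}; these terms are of size $\Delta_n^{1/2}\cdot\Delta_n$. It also means controlling the squared remainders, which are $O(\Delta_n^2)$, and the error from freezing $g$ at $X_{t_0}$, bounded using \eqref{eq:XX} and the polynomial moments of $\mu_\theta$. All the constants are uniform in $\Delta_n$ by Condition~\ref{cond:X}, which yields $|R(\Delta_n;\theta)|\le C(\theta)$.
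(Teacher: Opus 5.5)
Your proof is correct, and it takes a genuinely different route from the paper. The paper expands $[\ET f(Y_1)]^2$, $\ET f^2(Y_1)$ and $\ET[f(Y_1)f(Y_2)]$ separately to order $\Delta_n$, with nine cross terms for the mixed moment. That requires the first-order conditional means \eqref{eq:exp1-XX}, \eqref{eq:exp1-YX} and \eqref{eq:condmomexp}. The paper then eliminates $\HT$ using $\mu_\theta(\GT h)=0$ and expands the ratio \eqref{eqn:a1}.

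You instead use stationarity to write $\VT f(Y_1)-\mathrm{Cov}_\theta(f(Y_1),f(Y_2))=\tfrac12\ET[(f(Y_2)-f(Y_1))^2]$. With this identity, the drift-type terms (those involving $\GT$ and $\HT$) never contribute at order $\Delta_n$. You only need the Gaussian leading parts, \eqref{eq:isometry}, the second-moment bounds \eqref{eq:exp2-XX} and \eqref{eq:exp2-YX}, and, for the denominator, the crude $O(\Delta_n^{1/2})$ bound from \eqref{eq:YX-bound}.

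Your route gives a shorter proof and the closed form $K_f(\theta)=-\tfrac13\mu_\theta([b(\cdot;\theta)\partial_x f]^2)/\VT f(X_0)$. This follows from \eqref{eq:K} via $\mu_\theta(f\GT f)=-\tfrac12\mu_\theta([b(\cdot;\theta)\partial_x f]^2)$, itself a consequence of $\mu_\theta(\GT f^2)=0$, and it shows $K_f<0$. The paper's route instead produces the separate moment expansions \eqref{eqn:M1}--\eqref{eqn:M3}. It also keeps $K_f$ in the form in which it reappears in $f^*_2$ and $\gamma(\theta_0;\theta)$.

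Two remarks on your write-up. First, the step you call the main obstacle is immediate. Write $f(Y_2)-f(Y_1)=\Delta_n^{1/2}G+E$ with $g=b(\cdot;\theta)\partial_x f$ as in your notation, so that $\ET G^2=\tfrac23\mu_\theta(g^2)$ and $\ET E^2=O(\Delta_n^2)$. One Cauchy--Schwarz step then gives $\ET(GE)=O(\Delta_n)$. No freezing of $g$ is needed either: $\ET[g(X_{\Delta_n})^2\xi_{1,2}^2]=\tfrac13\mu_\theta(g^2)$ exactly by stationarity, and the cross term containing $\xi_{1,2}$ vanishes exactly after conditioning on $\F_{\Delta_n}$.

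Second, remove the sentence about ``$K\approx\tfrac13\mu(g^2)/\mathrm{Var}$'' and ``wrong order bookkeeping''. What you actually computed is $1-\breve a_n(\theta)_1=\tfrac13\Delta_n\mu_\theta(g^2)/\VT f(X_0)+O(\Delta_n^{3/2})$, which is exactly $-\Delta_n K_f(\theta)+O(\Delta_n^{3/2})$.
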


The following regularity conditions on $G_n$ are imposed in our
asymptotic theory.

\begin{cond}\label{cond:G.1}
Suppose that
\begin{itemize}

\myitem $f^*_1(x) = K_f(\theta_0)\left[\mu_0(f)-f(x)\right] \in \mathscr{H}_0$,
\myitem $f^*_2(x) = f(x) \Gnull f(x) + \frac{1}{6}[b(x;\theta_0)\partial_x f(x)]^2 - K_f(\theta_0)f(x)\left[f(x)-\mu_0(f)\right] \in \mathscr{H}_0$,
\myitem $(\theta \mapsto \mu_\theta(f)) \in \C^1$, $(\theta \mapsto
K_f(\theta)) \in \C^1$ and the remainder term in (\ref{eq:a}) satisfies that
\begin{equation}\label{eq:dR}
\sup_{\theta \in \M} \norm{\partial_{\theta^T} R(\Delta_n;\theta)} \leq C(\M),
\end{equation}
for any compact subset $\M \subseteq \Theta$ and for $\Delta_n$ sufficiently small.
\end{itemize}
\end{cond}

\vspace*{0.2cm}

The matrix norm $\norm{\ccs}$ in \eqref{eq:dR} and \eqref{eq:W} can be chosen
arbitrarily, and for convenience we suppose that $\norm{\ccs}$ is
submultiplicative. The following lemma establishes crucial technical steps in the
proof of the main Theorem \ref{thm:PBEF.1}. 

\begin{lem}\label{lem:PBEF.1}
Assume that Conditions \ref{cond:X} and \ref{cond:G.1} holds. Then,
for any $\theta \in \Theta$, 
\begin{equation*}
(n\Delta_n)^{-1} G_n(\theta) \cPnull \gamma(\theta_0;\theta)
\end{equation*}
where
\begin{equation}\label{eq:gamma}
\gamma(\theta_0;\theta) = \vtwo{K_f(\theta) (\mu_\theta-\mu_0)(f)}{\mu_0(f \Gnull f)+\frac{1}{6}\mu_0\left([b(\ccs;\theta_0)\partial_x f]^2\right)-K_f(\theta)\left[\mu_0(f^2) - \mu_0(f)\mu_\theta(f)\right]}.
\end{equation}
Moreover, for any compact subset $\M \subseteq \Theta$
\begin{equation}\label{eq:W}
\sup_{\theta \in \M} \norm{(n\Delta_n)^{-1}\partial_{\theta^T} G_n(\theta) - W(\theta)} \cPnull 0,
\end{equation}
where 
\begin{equation*}
W(\theta) =
\mtwo{1}{\mu_0(f)}{\mu_0(f)}{\mu_0(f^2)}
\mtwo{\partial_{\theta_1}\left[K_f(\theta)\mu_\theta(f)\right]}{\partial_{\theta_2}\left[K_f(\theta)\mu_\theta(f)\right]}{-\partial_{\theta_1} K_f(\theta)}{-\partial_{\theta_2} K_f(\theta)}.
\end{equation*}
\end{lem}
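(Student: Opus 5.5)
Substituting the expansion \eqref{eq:a} of Lemma \ref{lem:a} into \eqref{eq:PBEF.1} splits $G_n(\theta)$ into a $\theta$-free statistic and a deterministic, $\theta$-dependent correction. Writing $\breve{a}_n(\theta)_1 = 1+\Delta_n K_f(\theta)+\Delta_n^{3/2}R_1$ and $\breve{a}_n(\theta)_0 = -\Delta_n K_f(\theta)\mu_\theta(f)+\Delta_n^{3/2}R_0$, the summand becomes
\begin{equation*}
\vtwo{1}{f(Y_{i-1})}\Bigl[f(Y_i)-f(Y_{i-1}) - \Delta_n K_f(\theta)\bigl(f(Y_{i-1})-\mu_\theta(f)\bigr) - \Delta_n^{3/2}\bigl(R_0+R_1 f(Y_{i-1})\bigr)\Bigr].
\end{equation*}
After dividing by $n\Delta_n$, three groups of terms have to be controlled.

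\textbf{Deterministic-coefficient terms.} These are the terms $K_f(\theta)(n)^{-1}\sum(1,f(Y_{i-1}))^T(f(Y_{i-1})-\mu_\theta(f))$. By Lemma \ref{lem:LLN}, applied to $f$ and $f^2$ (both in $\C^1_p$), they converge to $K_f(\theta)(\mu_0(f)-\mu_\theta(f))$ and $K_f(\theta)(\mu_0(f^2)-\mu_0(f)\mu_\theta(f))$ respectively, with a minus sign in front. The $\Delta_n^{3/2}$ remainder terms are $O_P(\Delta_n^{1/2})\to 0$, again by Lemma \ref{lem:LLN} and the boundedness of $R$.

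\textbf{Random differences.} These are $A_n=(n\Delta_n)^{-1}\sum(f(Y_i)-f(Y_{i-1}))$ and $B_n=(n\Delta_n)^{-1}\sum f(Y_{i-1})(f(Y_i)-f(Y_{i-1}))$, and they contain the substantive part of the argument.

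The first sum telescopes: $A_n=(n\Delta_n)^{-1}(f(Y_n)-f(Y_1))$, which is $O_P((n\Delta_n)^{-1})\to 0$ by stationarity and finite moments.

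For $B_n$ I use the identity $f(Y_{i-1})(f(Y_i)-f(Y_{i-1})) = \tfrac12(f^2(Y_i)-f^2(Y_{i-1})) - \tfrac12(f(Y_i)-f(Y_{i-1}))^2$. The first part telescopes and is negligible. It therefore remains to show
\begin{equation*}
(n\Delta_n)^{-1}\sum_{i}\bigl(f(Y_i)-f(Y_{i-1})\bigr)^2 \cPnull -2\mu_0(f\Gnull f)-\tfrac13\mu_0\bigl([b\,\partial_x f]^2\bigr).
\end{equation*}
Here I use the Euler--Itô expansions. By Proposition \ref{prop:YX} at $i$ and at $i-1$ (with base point $X_{t^n_{i-2}}$), and Proposition \ref{prop:XX} linking $f(X_{t^n_{i-1}})$ to $f(X_{t^n_{i-2}})$, the leading part of $f(Y_i)-f(Y_{i-1})$ is $\Delta_n^{1/2}\,\partial_xf\,b(X_{t^n_{i-2}})\,[\varepsilon_{1,i-1}+\xi_{1,i}-\xi_{1,i-1}]$, up to terms with conditional second moment of order $\Delta_n^{3/2}$ or $\Delta_n^2$.

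The leading variance is computed from three facts: $\xi_{1,i}$ has variance $1/3$ and is independent of $\F^n_{i-1}$; $\varepsilon_{1,i-1}$ and $\xi_{1,i-1}$ are both $\F^n_{i-1}$-measurable with covariance $\tfrac12$ by \eqref{eq:isometry}; and $\xi_{1,i}$ is independent of both. This gives $\mathrm{Var}=1+\tfrac13+\tfrac13-2\cdot\tfrac12=\tfrac23$. Hence
\begin{equation*}
\Delta_n^{-1}\mathbb{E}_0\bigl[(f(Y_i)-f(Y_{i-1}))^2\mid\F^n_{i-2}\bigr]=\tfrac23(b\,\partial_xf)^2(X_{t^n_{i-2}})+O(\Delta_n^{1/2})R.
\end{equation*}

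The target value follows from the identity $\mu_0(f\Gnull f)=-\tfrac12\mu_0((b\,\partial_x f)^2)$, obtained by integration by parts against the invariant density (Green's formula, as in the paper's generator identities). Indeed, $-2\mu_0(f\Gnull f)-\tfrac13\mu_0((b\partial_xf)^2)=\tfrac23\mu_0((b\partial_xf)^2)$, matching the variance computation.

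To pass from conditional expectations to the actual sum, I use a standard lemma: the sums of the conditional expectations converge by Lemma \ref{lem:LLN}, and the centered parts are negligible. Since the conditioning is two steps back, I split the sum into even and odd indices to obtain martingale-difference arrays, and bound their second moments via \eqref{eq:exp2-YX} and \eqref{eq:exp2-XX}, giving $O((n\Delta_n)^{-2}\,n\Delta_n^2)\to0$. The $\Delta_n^{3/2}$ cross terms vanish after normalisation by the Cauchy--Schwarz inequality.

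\textbf{Main obstacle and derivative statement.} The main obstacle is this bookkeeping of the two-step dependence between $\xi_{1,i-1}$ and $\varepsilon_{1,i-1}$, which is where the $\tfrac16$ in $K_f$ comes from. Once it is settled, \eqref{eq:gamma} follows.

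For \eqref{eq:W}, differentiating gives
\begin{equation*}
\partial_{\theta^T}G_n=\sum_i(1,f(Y_{i-1}))^T\bigl[-\Delta_n\,\partial_{\theta^T}(-K_f\mu_\theta(f)) - \Delta_n\,\partial_{\theta^T}K_f\, f(Y_{i-1})\bigr] + \Delta_n^{3/2}\sum(\cdot)\,\partial_{\theta^T}R.
\end{equation*}
This factorises as a random matrix $n^{-1}\sum(1,f(Y_{i-1}))^T(1,f(Y_{i-1}))$, which is free of $\theta$ and converges by Lemma \ref{lem:LLN} to the first matrix in $W$, multiplied by a deterministic continuous matrix in $\theta$. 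Uniformity over compact $\M$ is therefore immediate from submultiplicativity of the norm, with the remainder controlled by \eqref{eq:dR}.
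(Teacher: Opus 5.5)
Your proposal is correct. For the first coordinate and for \eqref{eq:W} it is essentially the paper's argument: telescoping plus Lemma~\ref{lem:LLN}, and the factorisation of $(n\Delta_n)^{-1}\partial_{\theta^T}G_n(\theta)$ into the $\theta$-free matrix $n^{-1}\sum Z_{i-1}Z_{i-1}^T$ times a deterministic matrix, with uniformity coming from submultiplicativity.

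For the second coordinate you take a genuinely different route. The paper writes $g_2=f(Y_{i-1})\,g_1$ and expands both factors around $X_{t^n_{i-2}}$ via \eqref{eq:XX} and \eqref{eq:YX}. This isolates three leading terms:
\begin{itemize}
\item the drift term $\Delta_n f\mathcal{L}_0 f(X_{t^n_{i-2}})$;
\item the martingale term $f(X_{t^n_{i-2}})M_{i-1}(\theta_0)$;
\item the term $\Delta_n(\E^n_{i-1}-\Xi^n_{i-1})\Xi^n_{i-1}$, which produces the $\frac16=\frac12-\frac13$.
\end{itemize}
The negligibility of the long remainder $\mathcal{R}_2$ is left to omitted ``tedious reasoning''. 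Your identity $x(y-x)=\frac12(y^2-x^2)-\frac12(y-x)^2$ instead reduces everything to a telescoping term plus a realized quadratic variation of $f(Y)$. That needs only the leading-order conditional second moment (variance $\frac23$ of $\varepsilon_{1,i-1}-\xi_{1,i-1}+\xi_{1,i}$), crude moment bounds, and the identity $\mu_0(f\mathcal{L}_0 f)=-\frac12\mu_0((b\partial_x f)^2)$. That identity is most cleanly obtained from $\mu_0(\mathcal{L}_0 f^2)=0$ and $\mathcal{L}_0 f^2=2f\mathcal{L}_0 f+(b\partial_x f)^2$, which avoids boundary terms in an integration by parts.

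Your argument is therefore self-contained where the paper's is not. It also shows that the second entry of $\gamma(\theta_0;\theta)$ simplifies to $-\frac13\mu_0((b\partial_x f)^2)-K_f(\theta)[\mu_0(f^2)-\mu_0(f)\mu_\theta(f)]$. What the paper's decomposition buys is reuse: \eqref{eqn:g2-exp-X}, with its explicit martingale part, is what drives the central limit theorem at scale $\sqrt{n\Delta_n}$ in Theorem~\ref{thm:PBEF.1}, and your identity does not directly deliver that.

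Two small corrections:
\begin{itemize}
\item The second moments of your centred even/odd martingale parts require fourth moments of $f(Y_i)-f(Y_{i-1})$. These follow from \eqref{eq:YX-bound} and \eqref{eq:Esupk}, not from \eqref{eq:exp2-YX}, which only controls $\xi_{2,i}^2$.
\item The average of the conditional means $(b\partial_x f)^2(X_{t^n_{i-2}})$ needs the ergodic theorem for the sampled diffusion (Lemma~3.1 in \cite{ped}) rather than Lemma~\ref{lem:LLN}.
\end{itemize}
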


\begin{thm}\label{thm:PBEF.1}
Assume Conditions \ref{cond:X}, \ref{cond:T} and \ref{cond:G.1}, 
that $W(\theta)$ is non-singular, and that the identifiability condition 
$\gamma(\theta_0;\theta) \neq 0$ for all $\theta \neq \theta_0$
is satisfied.

\vspace{2mm}

\noindent
Then the following assertions hold:
\begin{itemize}
\myitem There exists a consistent sequence of $G_n$-estimators
$(\hat{\theta}_n)$ which, as $n \to \infty$, is unique in any compact
subset $\K \subseteq \Theta$ containing $\theta_0$ with
$\Pnull$-probability approaching one. 
\myitem If, moreover, $n\Delta_n^2 \to 0$, then
\begin{equation}\label{eq:AN.1}
\sqrt{n\Delta_n}\left(\hat{\theta}_n-\theta_0\right) \cDnull \N_2\left(0,\left[W(\theta_0)^{-1} V_0(f) (W(\theta_0)^{-1})^T\right]\right),
\end{equation}
where
\begin{equation*}
V_0(f) = \mtwo
{\mu_0\left(\left[\partial_x \U_0(f^*_1)b(\ccs;\theta_0)\right]^2\right)}
{\mathbb{C}\mathrm{ov}(f)}
{\mathbb{C}\mathrm{ov}(f)}
{\mu_0\left(\left[\partial_x \U_0(f^*_2) + f \partial_x f \right]^2 b^2(\ccs;\theta_0)\right)},
\end{equation*}
with
\begin{equation*}
\mathbb{C}\mathrm{ov}(f) = \mu_0\left(\partial_x \U_0(f^*_1)\left[\partial_x U_0(f^*_2) + f \partial_x f \right] b^2(\ccs;\theta_0)\right).
\end{equation*}
\end{itemize}
\end{thm}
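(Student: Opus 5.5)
The plan is to apply the general asymptotic theory for estimating functions in \cite{jjms} (in the form used in \cite{ped}) to the normalized estimating function $(n\Delta_n)^{-1}G_n(\theta)$. For the first assertion (existence, uniqueness and consistency) that theory requires three things: pointwise convergence in probability of $(n\Delta_n)^{-1}G_n(\theta)$ to a limit $\gamma(\theta_0;\theta)$ with $\gamma(\theta_0;\theta_0)=0$; locally uniform convergence of $(n\Delta_n)^{-1}\partial_{\theta^T}G_n(\theta)$ to a continuous $W(\theta)$ with $W(\theta_0)$ non-singular; and the identifiability condition $\gamma(\theta_0;\theta)\neq 0$ for $\theta\neq\theta_0$. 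Lemma \ref{lem:PBEF.1} supplies the first two convergences, and $W$ is continuous because $\theta\mapsto\mu_\theta(f)$ and $\theta\mapsto K_f(\theta)$ are $\C^1$ by Condition \ref{cond:G.1}. It remains to check that $\gamma(\theta_0;\theta_0)=0$. The first coordinate vanishes trivially. The second coordinate equals $\mu_0(f\Gnull f)+\frac16\mu_0([b\partial_xf]^2)-K_f(\theta_0)\mathbb{V}\mathrm{ar}_0 f(X_0)$, which is zero by the definition \eqref{eq:K} of $K_f$. Condition \ref{cond:T} puts $\theta_0$ in the interior, so the theorem in \cite{jjms} gives the first assertion.

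For asymptotic normality, I would use the mean value expansion $0=G_n(\hat\theta_n)=G_n(\theta_0)+\partial_{\theta^T}G_n(\theta_n^*)(\hat\theta_n-\theta_0)$. Uniform convergence \eqref{eq:W} together with consistency gives $(n\Delta_n)^{-1}\partial_{\theta^T}G_n(\theta_n^*)\cPnull W(\theta_0)$. The whole result then reduces to a CLT
\[
(n\Delta_n)^{-1/2}G_n(\theta_0)\cDnull \N_2(0,V_0(f)),
\]
after which Slutsky's lemma gives \eqref{eq:AN.1}.

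To prove this CLT I would insert the expansion of Lemma \ref{lem:a} into $G_n(\theta_0)$. Writing $f(Y_i)-\breve a_0-\breve a_1 f(Y_{i-1})=[f(Y_i)-f(Y_{i-1})]+\Delta_n K_f(\theta_0)[\mu_0(f)-f(Y_{i-1})]+\Delta_n^{3/2}R(1+|f(Y_{i-1})|)$ gives the following.
\begin{itemize}
\item The \emph{first coordinate} is a telescoping sum $f(Y_n)-f(Y_1)=O_P(1)$, plus $\Delta_n\sum f^*_1(Y_{i-1})$, plus a remainder. After scaling by $(n\Delta_n)^{-1/2}$, the telescoping part is negligible. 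The remainder is $O((n\Delta_n^2)^{1/2})\to0$, which is where the assumption $n\Delta_n^2\to0$ enters.
\item The \emph{second coordinate} needs $f(Y_{i-1})[f(Y_i)-f(Y_{i-1})]=\frac12[f^2(Y_i)-f^2(Y_{i-1})]-\frac12[f(Y_i)-f(Y_{i-1})]^2$. This again gives a telescoping part plus a quadratic-variation-type sum $-\frac12\sum[f(Y_i)-f(Y_{i-1})]^2$. I would center that sum using the Euler-Itô expansions (Propositions \ref{prop:XX} and \ref{prop:YX}, with $\ET(\varepsilon_{1,i}\xi_{1,i})=\frac12$), applied to $Y_i$ and $Y_{i-1}$ relative to $X_{t^n_{i-2}}$ and $X_{t^n_{i-1}}$. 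Its conditional mean should be $\Delta_n\,\frac13[b\partial_xf]^2$ up to $O(\Delta_n^{3/2})$, which is the origin of the $\frac16[b\partial_xf]^2$ term in $f^*_2$. Collecting terms, the second coordinate becomes $\Delta_n\sum f^*_2(Y_{i-1})$ plus a martingale term of the form $\Delta_n^{1/2}\sum f\partial_xf\, b\,(\text{Gaussian increments})$ plus negligible remainders.
\end{itemize}

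Next I would handle $\Delta_n\sum f^*_k(Y_{i-1})$ as in the proof of Lemma \ref{lem:CLT}. Because $f^*_k\in\mathscr{H}_0$, the Poisson equation $\Gnull U_0(f^*_k)=-f^*_k$ and Itô's formula represent this sum as $-\int \partial_xU_0(f^*_k)b\,dB_s$ plus boundary and discretization errors of size $O_P(\sqrt{n}\Delta_n)$. Combined with the martingale term from the second coordinate, the leading part of $(n\Delta_n)^{-1/2}G_n(\theta_0)$ is $(n\Delta_n)^{-1/2}\int_0^{n\Delta_n}\bigl(\partial_xU_0(f^*_1),\ \partial_xU_0(f^*_2)+f\partial_xf\bigr)^T b\,dB_s$ up to sign. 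The martingale CLT, with quadratic variation converging by ergodicity, then yields exactly $V_0(f)$.

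The main obstacle is the second coordinate. The difficulty is getting the martingale part $f\partial_xf\,b$ with the correct coefficient, given that $\xi_{1,i-1}$ and $\xi_{1,i}$ overlap in the Brownian increments on adjacent intervals. I would handle this by writing $\xi_{1,i}$ explicitly as $\Delta_n^{-3/2}\int_{t_{i-1}}^{t_i}(t_i-s)dB_s$, as in \cite{edc-2000}, and regrouping by Brownian interval. I would also need to verify that all conditional-variance remainders are $o(1)$ after scaling under $n\Delta_n^2\to0$.
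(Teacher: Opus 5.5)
Your consistency and uniqueness argument, the reduction to a CLT for $(n\Delta_n)^{-1/2}G_n(\theta_0)$, and your treatment of the first coordinate all match the paper. For the second coordinate you use summation by parts instead of the paper's expansion of $g_2$ around $X_{t^n_{i-2}}$. The paper's expansion keeps the martingale $f(X_{t^n_{i-2}})M_{i-1}(\theta_0)$ and the term $\Delta_n(\E^n_{i-1}-\Xi^n_{i-1})\Xi^n_{i-1}$, whose conditional mean is $\frac16 h^2$. Your alternative is legitimate, but the step you rely on to produce the $f\partial_x f\,b$ martingale fails.

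Put $h=b(\cdot;\theta_0)\partial_x f$ and $\eta_{i-1}=\varepsilon_{1,i-1}-\xi_{1,i-1}$. Propositions \ref{prop:XX} and \ref{prop:YX} give $f(Y_i)-f(Y_{i-1})=\Delta_n^{1/2}[h(X_{t^n_{i-2}})\eta_{i-1}+h(X_{t^n_{i-1}})\xi_{1,i}]+\rho_i$, with $\Enull(\rho_i^2\mid\F^n_{i-2})=O(\Delta_n^2)$. After centering, $\sum_i[f(Y_i)-f(Y_{i-1})]^2$ splits into three martingale-difference arrays. Their increments are of order $\Delta_n$ and quadratic in the Gaussian variables, so they sum to $O_{\Pnull}(n^{1/2}\Delta_n)$. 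The terms involving $\rho_i$ contribute $O_{\Pnull}(n\Delta_n^{3/2})$. Both are $o_{\Pnull}(\sqrt{n\Delta_n})$ under $n\Delta_n^2\to0$. So no term linear in the Brownian increments at order $\Delta_n^{1/2}$ survives, and regrouping by Brownian interval cannot create one. The overlap problem you single out belongs to the direct expansion, not to your decomposition. What your decomposition actually yields is that the second coordinate of $(n\Delta_n)^{-1/2}G_n(\theta_0)$ equals $\sqrt{n\Delta_n}\,\frac1n\sum_{i=2}^n\tilde f(X_{t^n_{i-2}})+o_{\Pnull}(1)$ with $\tilde f=-\frac13h^2-K_f(\theta_0)f[f-\mu_0(f)]$. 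This $\tilde f$ is not $f^*_2$: $f\Gnull f$ appears nowhere, and your $\frac13$ is not the source of the $\frac16$ in $f^*_2$. So your claim that the second coordinate is $\Delta_n\sum f^*_2(Y_{i-1})$ plus an $f\partial_xf\,b$ martingale does not follow from the steps you describe.

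The missing idea is the identity $\tilde f=f^*_2-\frac12\Gnull(f^2)$, which follows from $\Gnull(f^2)=2f\Gnull f+h^2$. Hence $V=\U_0(f^*_2)+\frac12[f^2-\mu_0(f^2)]\in\C^2_p(S)$ solves $\Gnull V=-\tilde f$, and $\partial_xV=\partial_x\U_0(f^*_2)+f\partial_xf$. Apply Itô's formula to $V(X_t)$ exactly as the paper does for $\U_0(f^*_2)$, using $\tilde f\in\C^2_p(S)$ for the discretization step. You cannot simply cite Lemma \ref{lem:CLT} for $\tilde f$, because $\tilde f\in\mathscr{H}_0$ is not assumed. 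This turns the second coordinate into $(n\Delta_n)^{-1/2}\int_0^{n\Delta_n}[\partial_x\U_0(f^*_2)+f\partial_xf](X_s)\,b(X_s;\theta_0)\,dB_s+o_{\Pnull}(1)$. Equivalently, Itô's formula for $f^2(X_t)$ turns $-\frac12\Delta_n\sum\Gnull(f^2)(X_{t^n_{i-2}})$ into $\sum f(X_{t^n_{i-2}})h(X_{t^n_{i-2}})(B_{t^n_{i-1}}-B_{t^n_{i-2}})$ plus negligible terms. That is where the $f\partial_xf$ in $V_0(f)$ comes from, not from the overlap of $\xi_{1,i-1}$ and $\xi_{1,i}$. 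With this step added, your route gives the stated $V_0(f)$, including the covariance with the first coordinate. It also has an advantage over the paper: the long, omitted remainder estimate \eqref{eq:AN.R2.2} for $\mathcal{R}_2$ is replaced by elementary moment bounds on the quadratic sum and on the $\Delta_n^{3/2}$ remainder.
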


\vspace*{0.2cm}

Compared to the results in \cite{ped}, the lower order
$\Delta_n^{3/2}$ of the remainder term in the expansion
\eqref{eq:a} necessitates the rate assumption $n\Delta_n^2 \to 0$,
which is stronger than what is needed for discretely observed
diffusion processes. The same strong rate assumption appears in
\cite{id-2006} to ensure asymptotic normality for a class of minimum
contrast estimators with  observations of an integrated diffusion.

        \section{Proofs and auxiliary results}\label{sec:proofs}

In this section we present the proofs of the results of the paper and
some auxiliary results that are needed in the proofs. 

\subsection{Auxiliary results}

We use several times that for a diffusion process $X$ satisfying
Condition \ref{cond:X} and a function $f \in \C^1_p(S)$, there exists,
for every $k \geq 1$, a constant $C_{k,\theta}>0$ such that
\begin{equation}\label{eq:Esupk}
\ET\left(\sup_{s \in [0,\Delta]}|f(X_{t+s})-f(X_t)|^k \given
  \F_t\right) \leq C_{k,\theta} \Delta^{k/2} \left(1+|X_t|\right)^{C_{k,\theta}}.
\end{equation}
This classical result can be proved following the proofs of the similar results
in \cite{eed-1997} and \cite{edc-2000}.

We also use the well-known result that if $a(\cdot;\theta) \in \C_p^{2k,0}(S \times
\Theta)$, $b(\cdot ;\theta) \in \C_p^{2k,0}(S \times \Theta)$ and $f
\in \C^{2(k+1)}_p(S)$ for a $k \geq 0$. Then
\begin{equation}\label{eq:condmomexp}
\ET\left(f(X_{t+\Delta}) \given \F_t\right) = \sum_{i=0}^k \frac{\Delta^i}{i!} \GT^i f(X_t) +  \Delta^{k+1}R(\Delta,X_t;\theta),
\end{equation}
see e.g.\ Lemma 1.10 in \cite{sm}.

\begin{lem}\label{lemx:ibp}
Let $(X_t)_{t \geq 0}$ be a continuous semimartingale on
$(\Omega,\F,(\F_t),\mathbb{P})$,  and suppose that $(H_t)_{t \geq 0}$
is $(\F_t)$-adapted and continuous. For any $t \geq t^* \geq 0$, 
\begin{equation*}
\int_{t^*}^t \left(\int_{t^*}^s H_u dX_u\right) \dd s = \int_{t^*}^t (t-s) H_s dX_s.
\end{equation*}
\end{lem}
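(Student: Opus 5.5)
The identity in Lemma \ref{lemx:ibp} is a stochastic Fubini statement. The cleanest route avoids Fubini entirely and uses integration by parts for continuous semimartingales. Define the continuous semimartingale
\begin{equation*}
M_s = \int_{t^*}^s H_u \dd X_u, \qquad s \geq t^*,
\end{equation*}
which is well defined because $H$ is adapted and continuous, hence locally bounded and predictable. Note that $M_{t^*}=0$. Let $A_s = s - t$, a deterministic continuous function of finite variation, so its quadratic covariation with $M$ vanishes: $[A,M]\equiv 0$.

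Apply the integration by parts formula $A_tM_t - A_{t^*}M_{t^*} = \int_{t^*}^t A_s \dd M_s + \int_{t^*}^t M_s \dd A_s + [A,M]_t - [A,M]_{t^*}$. The left side vanishes, since $A_t = 0$ and $M_{t^*}=0$. The covariation terms vanish as well. Since $\dd A_s = \dd s$, this gives
\begin{equation*}
\int_{t^*}^t M_s \dd s = -\int_{t^*}^t (s-t) \dd M_s = \int_{t^*}^t (t-s) \dd M_s .
\end{equation*}
By the associativity of stochastic integrals, $\dd M_s = H_s \dd X_s$. Hence $\int_{t^*}^t (t-s)\dd M_s = \int_{t^*}^t (t-s)H_s \dd X_s$, which is exactly the claim. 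Here the integrand $(t-s)H_s$ is adapted and continuous in $s$ on $[t^*,t]$, with $t$ fixed.

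The only point needing care is that the left-hand side $\int_{t^*}^t M_s \dd s$ is a pathwise Lebesgue (Riemann) integral of a continuous process. This coincides with the stochastic integral $\int M_s \dd A_s$ against the finite-variation process $A$, so the formula is legitimate. I expect no real obstacle. The step to state carefully is the associativity $\int K \dd(\int H \dd X) = \int KH \dd X$ for locally bounded predictable $K,H$, which is standard.
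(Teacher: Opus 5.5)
Your proof is correct and is essentially the paper's argument. The paper likewise applies stochastic integration by parts, to $sZ_s$ with $Z=\int_0^{\cdot} H\,dX$ after reducing to $t^*=0$; your choice $A_s=s-t$ simply absorbs the boundary term $tZ_t=\int_0^t t\,dZ_s$, which the paper subtracts explicitly.
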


\begin{proof}
Without loss of generality, we can assume that $t^* = 0$. Define  $Z_t
= \int_0^t H_s dX_s$. By stochastic integration-by-parts (the Itô-
formula), $d(tZ_t) = t dZ_t + Z_t \dd t$. Thus
\begin{equation*}
\int_0^t t d Z_s = t Z_t = \int_0^t Z_s \dd s + \int_0^t s dZ_s,
\end{equation*}
which verifies the result. 
\end{proof}

\subsection{Proofs}

Since we study limits as $\Delta_n \rightarrow 0$, we can in all the
proofs assume that $\Delta_n$ is bounded from above, e.g.\ $\Delta_n
\leq 1$.

\begin{proof}[Proof of Proposition \ref{prop:XX}]
By Itô's formula,
\begin{equation*}
f(X_{t^n_i}) = f(X_{t^n_{i-1}}) + \int_{(i-1)\Delta_n}^{i\Delta_n} \GT f(X_s) \dd s + \int_{(i-1)\Delta_n}^{i\Delta_n} \partial_x f(X_s)b(X_s;\theta)dB_s.
\end{equation*}
With the definitions
\begin{eqnarray}
\label{eqn:var1}	\varepsilon_{1,i}	&=& \Delta_n^{-1/2}\int_{(i-1)\Delta_n}^{i\Delta_n} dB_s, \\
							A_i					&=& \int_{(i-1)\Delta_n}^{i\Delta_n} \GT f(X_s) \dd s, \nonumber \\
							D_i					&=& \int_{(i-1)\Delta_n}^{i\Delta_n} \left[\partial_x f(X_s)b(X_s;\theta)-\partial_x f(X_{t^n_{i-1}})b(X_{t^n_{i-1}};\theta)\right] dB_s, \nonumber \\
\label{eqn:var2} \varepsilon_{2,i}	&=& A_i + D_i, 
\end{eqnarray}
we obtain an expansion of the form
\begin{equation*}
f(X_{t^n_i}) = f(X_{t^n_{i-1}}) + \Delta_n^{1/2} \partial_x f(X_{t^n_{i-1}})b(X_{t^n_{i-1}};\theta)\varepsilon_{1,i}+\varepsilon_{2,i},
\end{equation*}
where $\varepsilon_{1,i}$ and $\varepsilon_{2,i}$ are $\F^n_i$-measurable and $\varepsilon_{1,i} \sim \N(0,1)$ and independent of $\F^n_{i-1}$. \\

To prove the conditional moment expansions \eqref{eq:exp1-XX}-\eqref{eq:exp2-XX}
apply Fubini's theorem followed by (\ref{eq:condmomexp}) to obtain
\begin{eqnarray*}
\ET\left(A_i \given \F^n_{i-1}\right)
&=& \int_0^{\Delta_n} \ET\left(\GT f(X_{t^n_{i-1}+u}) \given \F^n_{i-1}\right) \dd u \\
&=& \int_0^{\Delta_n} \left[\GT f(X_{t^n_{i-1}}) + u \cc
    R(u,X_{t^n_{i-1}};\theta)\right] \dd u \\ 
&=& \Delta_n \GT f(X_{t^n_{i-1}}) + \Delta_n^2 R(\Delta_n,X_{t^n_{i-1}};\theta).
\end{eqnarray*}
Furthermore, since $\ET\left(\int_0^t \left[\partial_x
    f(X_s)b(X_s;\theta)\right]^2 \dd s\right) =
t \mu_\theta\left([f' b(\ccs;\theta)]^2\right) <\infty$,
the stochastic integral $\int_0^t \partial_x f(X_s)b(X_s;\theta)dB_s$ is
a $\PT$-martingale, so $\ET\left(D_i \given \F^n_{i-1}\right) = 0$,
which verifies \eqref{eq:exp1-XX}. \\

For conditional moments of order $k \geq 2$, we write
\begin{equation*}
A_i = \Delta_n \GT f(X_{t^n_{i-1}}) + \int_{(i-1)\Delta_n}^{i\Delta_n} \left[\GT f(X_s)- \GT f(X_{t^n_{i-1}})\right] \dd s
\end{equation*}
and observe that, by Jensen's inequality,
\begin{eqnarray*}
\abs{\int_{(i-1)\Delta_n}^{i\Delta_n} \left[\GT f(X_s) - \GT f(X_{t^n_{i-1}})\right] \dd s}^k
& \leq & \Delta_n^{k} \cc \Delta_n^{-1} \int_{(i-1)\Delta_n}^{i\Delta_n} |\GT f(X_s) - \GT f(X_{t^n_{i-1}})|^k \dd s \\
& \leq & \Delta_n^k \sup_{u \in [0,\Delta_n]} |\GT f(X_{t^n_{i-1}+u}) - \GT f(X_{t^n_{i-1}})|^k.
\end{eqnarray*}
Hence, by (\ref{eq:Esupk}),
\begin{eqnarray*}
\lefteqn{\ET\left(|A_i|^k \given \F^n_{i-1}\right)} \\
&\leq_{C_k}&	\Delta_n^k (1+|X_{t^n_{i-1}}|)^{C_k} + \Delta_n^k \cc \ET\left(\sup_{u \in [0,\Delta_n]} |\GT f(X_{t^n_{i-1}+u}) - \GT f(X_{t^n_{i-1}})|^k \given \F^n_{i-1}\right) \\
&\leq_{C_k}&	\Delta_n^k (1+|X_{t^n_{i-1}}|)^{C_k}.
\end{eqnarray*}

Similarly, with $h(x;\theta)=\partial_x f(x)b(x;\theta)$, the
Burkholder-Davis-Gundy inequality (see e.g. \cite{dp}), Jensen's
inequality and (\ref{eq:Esupk}) imply that for all $k \geq 2$,
\begin{eqnarray*}
\lefteqn{\ET\left(\abs{D_i}^k \given \F^n_{i-1}\right)} \\
& =& \ET\left(\abs{\int_{(i-1)\Delta_n}^{i\Delta_n}
     \left[h(X_s;\theta)-h(X_{t^n_{i-1}};\theta)\right] dB_s}^k \given
     \F^n_{i-1}\right) \\ 
& \leq_{C_k}& \ET\left(\left[\int_{(i-1)\Delta_n}^{i\Delta_n}
              \left[h(X_s;\theta)-h(X_{t^n_{i-1}};\theta)\right]^2 \dd
              s\right]^{k/2} \given \F^n_{i-1}\right) \\ 
& \leq& \Delta_n^{k/2} \cc \ET\left(\Delta_n^{-1}
        \int_{(i-1)\Delta_n}^{i\Delta_n}
        |h(X_s;\theta)-h(X_{t^n_{i-1}};\theta)|^k \dd s \given
        \F^n_{i-1}\right) \\ 
& \leq& \Delta_n^{k/2} \cc \ET\left(\sup_{u
\in [0,\Delta_n]} |h(X_{t^n_{i-1}+u};\theta)-h(X_{t^n_{i-1}};\theta)|^k
\given \F^n_{i-1}\right) \\ 
& \leq_{C_k}& \Delta_n^{k} (1+|X_{t^n_{i-1}}|)^{C_k}
\end{eqnarray*}
and since $|\varepsilon_{2,i}|^k \leq_{C_k} |A_i|^k + |D_i|^k$, we
conclude that $\ET\left(|\varepsilon_{2,i}|^k \given \F^n_{i-1}\right)
= \Delta_n^k R(\Delta_n,X_{t^n_{i-1}};\theta)$.
\end{proof}

\begin{proof}[Proof of Lemma \ref{lem:YX-rem}]
Since $h$ is of polynomial growth,
\begin{equation*}
\abs{h(Z^n_i)} \leq_{C_k} 1 + |X_{t^n_{i-1}}|^{C_k} + |Y_i-X_{t^n_{i-1}}|^{C_k},
\end{equation*}
and by Jensen's inequality,
\begin{equation*}
|Y_i-X_{t^n_{i-1}}|^k \leq \Delta_n^{-1}
\int_{(i-1)\Delta_n}^{i\Delta_n} |X_s - X_{t^n_{i-1}}|^k \dd s \leq
\sup_{u \in [0,\Delta_n]} |X_{t^n_{i-1}+u}-X_{t^n_{i-1}}|^k. 
\end{equation*}
Hence the lemma follows because (\ref{eq:Esupk}) implies that for any
$k \geq 1$
\begin{equation*}
\ET\left(|Y_i-X_{t^n_{i-1}}|^k \given \F^n_{i-1} \right) \leq_{C_k} \Delta_n^{k/2}(1+|X_{t^n_{i-1}}|)^{C_k}.
\end{equation*}

\end{proof}

\begin{proof}[Proof of Proposition \ref{prop:YX}]
We start by proving the result for the identity mapping $f(x)=x$. In this
case, $f' \equiv 1$ and $f'' \equiv 0$, so the Euler-Itô expansion
\eqref{eq:YX} takes the form 
\begin{equation}\label{eq:YX-identity}
Y_i = X_{t^n_{i-1}} + \Delta_n^{1/2} b(X_{t^n_{i-1}};\theta)\xi^*_{1,i}+\xi^*_{2,i},
\end{equation}
where asterisks $(*)$ are used to distinguish the remainder
terms here from the general case. Here equation
(\ref{eq:exp1-YX}) has the form 
\begin{equation}\label{eq:xi}
\ET\left(\xi^*_{2,i} \given \F^n_{i-1}\right) = \Delta_n \frac{1}{2} a(X_{t^n_{i-1}};\theta) + \Delta_n^{3/2} R(\Delta_n,X_{t^n_{i-1}};\theta).
\end{equation}

By applying Lemma \ref{lemx:ibp} to the stochastic integral, we
find that
\begin{eqnarray*}
Y_i - X_{t^n_{i-1}}
&=& \Delta_n^{-1} \int_{(i-1)\Delta_n}^{i\Delta_n} \left(\int_{(i-1)\Delta_n}^s a(X_u;\theta) \dd u + \int_{(i-1)\Delta_n}^s b(X_u;\theta) dB_u\right) \dd s \\
&=& \Delta_n^{-1} \int_{(i-1)\Delta_n}^{i\Delta_n}
    \int_{(i-1)\Delta_n}^s a(X_u;\theta) \dd u \dd s +
    \Delta_n^{-1}\int_{(i-1)\Delta_n}^{i\Delta_n} (i\Delta_n-s)
    b(X_s;\theta) dB_s, 
\end{eqnarray*}
which, in turn, yields an expansion of the form \eqref{eq:YX-identity} by defining
\begin{eqnarray*}
\xi^*_{1,i}	&=& \Delta_n^{-3/2}\int_{(i-1)\Delta_n}^{i\Delta_n} (i\Delta_n-s) dB_s, \\
A_i &=& \Delta_n^{-1}
\int_{(i-1)\Delta_n}^{i\Delta_n} 
\int_{(i-1)\Delta_n}^s a(X_u;\theta) \dd u \dd s, \\ 
D_i			&=& \Delta_n^{-1}\int_{(i-1)\Delta_n}^{i\Delta_n} \left[b(X_s;\theta)-b(X_{t^n_{i-1}};\theta)\right](i\Delta_n-s) dB_s, \\
\xi^*_{2,i}	&=& A_i + D_i.
\end{eqnarray*}

To verify the properties of $\xi^*_{1,i}$ and $\xi^*_{2,i}$,
we observe that both are measurable
w.r.t. $\F^n_i$, $\xi^*_{1,i}$ is Gaussian and independent of
$\F^n_{i-1}$ and $\ET(\xi^*_{1,i})=0$. Moreover, by Itô's isometry 
\begin{equation*}
\ET((\xi^*_{1,i})^2) = \Delta_n^{-3}\int_{(i-1)\Delta_n}^{i\Delta_n} (i\Delta_n-s)^2 \dd s = \frac{1}{3}.
\end{equation*}
To prove the conditional moment expansions of $\xi^*_{2,i}$, we first
use the martingale property of $\int_0^t
b(X_s;\theta)(i\Delta_n-s)dB_s$ to conclude that  
\[
\ET\left(D_i \given \F^n_{i-1}\right) =	\Delta_n^{-1} \cc
\ET\left(\int_{(i-1)\Delta_n}^{i\Delta_n}
  \left[b(X_s;\theta)-b(X_{t^n_{i-1}};\theta)\right](i\Delta_n-s) dB_s
  \given \F^n_{i-1}\right)  = 0.
\]
Therefore, $\ET(\xi^*_{2,i} \hspace*{0.05cm} | \hspace*{0.05cm}
\F^n_{i-1}) = \ET(A_i  \hspace*{0.05cm} | \hspace*{0.05cm}
\F^n_{i-1})$. Application of Fubini's theorem and \eqref{eq:condmomexp} shows that 
\begin{eqnarray*}
\ET\left(A_i \given \F^n_{i-1}\right) &=& \Delta_n^{-1}\int_{(i-1)\Delta_n}^{i\Delta_n} \int_{0}^{s-t^n_{i-1}} \ET\left(a(X_{t^n_{i-1}+v};\theta) \given \F^n_{i-1}\right) \dd v \dd s\\
&=& \Delta_n^{-1}\int_{(i-1)\Delta_n}^{i\Delta_n}
    \int_{0}^{s-t^n_{i-1}} \left[a(X_{t^n_{i-1}};\theta) +
    vR(v,X_{t^n_{i-1}};\theta)\right] \dd v \dd s \\
&=& \Delta_n \frac{1}{2} a(X_{t^n_{i-1}};\theta) + \Delta_n^{-1}
    \int_{(i-1)\Delta_n}^{i\Delta_n} \int_0^{s-t^n_{i-1}} v
    R(v,X_{t^n_{i-1}};\theta) \dd v \dd s
\end{eqnarray*}
and \eqref{eq:xi} follows because the last term equals $\Delta_n^2
R(\Delta_n,x;\theta)$. In fact, we see that the slightly stronger result $\ET(\xi^*_{2,i}
\hspace*{0.05cm} | \hspace*{0.05cm} \F^n_{i-1}) = \Delta_n \frac{1}{2}
a(X_{t^n_{i-1}};\theta) + \Delta_n^2 R(\Delta_n,X_{t^n_{i-1}};\theta)$
holds for this particular choice of $f$.

To show that $\ET((\xi^*_{2,i})^2 \hspace*{0.05cm} | \hspace*{0.05cm}
\F^n_{i-1}) = \Delta_n^2 R(\Delta_n,X_{t^n_{i-1}};\theta)$, we use
that by Jensen's inequality
\begin{equation*}
\abs{A_i}^2 \leq \Delta_n^{-1}\int_{(i-1)\Delta_n}^{i\Delta_n}
\abs{\int_{(i-1)\Delta_n}^s a(X_u;\theta) \dd u}^2 \dd s \leq \sup_{s
  \in [0,\Delta_n]}\abs{\int_{t^n_{i-1}}^{t^n_{i-1}+s} a(X_u;\theta)
  \dd u}^2. 
\end{equation*}
Moreover, for any $t \geq
0$ (again by Jensen's inequality),
\begin{eqnarray*}
\ET\left(\sup_{s \in [0,\Delta_n]} \abs{\int_{t}^{t+s} a(X_u;\theta) \dd u}^2 \given \F_t\right)
&	\leq	& \ET\left(\sup_{s \in [0,\Delta_n]} s \int_{t}^{t+s} |a(X_u;\theta)|^2 \dd u \given \F_t\right) \\
&=& \Delta_n \ET\left(\int_{t}^{t+\Delta_n} |a(X_u;\theta)|^2 \dd u \given \F_t\right).
\end{eqnarray*}
Now by the linear growth of $a(\ccs;\theta)$ (Condition \ref{cond:X}),
$|a(X_u;\theta)|^2 \leq_{C} 1 + |X_t|^2 + |X_u-X_t|^2$, so
\begin{eqnarray*}
\lefteqn{\ET\left(\sup_{s \in [0,\Delta_n]} \abs{\int_{t}^{t+s} a(X_u;\theta)
  \dd u}^2 \given \F_t\right)} \\ 
&\leq_{C}& \Delta_n^2 (1+|X_t|^2)+\Delta_n \int_t^{t+\Delta_n}\ET\left(|X_u-X_t|^2\given\F_t\right) \dd u \\
&\leq_{C}& \Delta_n^2 (1+|X_t|)^{C} + \Delta_n^2 \cc \ET\left(\sup_{v \in [0,\Delta_n]}|X_{t+v}-X_t|^2 \given \F_t\right)
	\leq_{C}	\Delta_n^2 (1+|X_t|)^{C},
\end{eqnarray*}
where (\ref{eq:Esupk}) implies the final inequality. In conclusion,
$\ET\left(\abs{A_i}^2 \given \F^n_{i-1}\right) 
\leq_{C} \Delta_n^2 (1+|X_{t^n_{i-1}}|)^{C}$.
To obtain a similar bound for $|D_i|^2$, we apply the
Burkholder-Davis-Gundy inequality, Jensen's inequality and
(\ref{eq:Esupk})  to obtain that
\begin{eqnarray*}
\ET\left(\abs{D_i}^2 \given \F^n_{i-1}\right)
& =&				 \ET\left(\Delta_n^{-2} \abs{\int_{(i-1)\Delta_n}^{i\Delta_n} \left[b(X_s;\theta)-b(X_{t^n_{i-1}};\theta)\right](i\Delta_n-s) dB_s}^2 \given \F^n_{i-1}\right) \\
& \leq_{C}& \ET\left(\int_{(i-1)\Delta_n}^{i\Delta_n}
               \left[b(X_s;\theta)-b(X_{t^n_{i-1}};\theta)\right]^2
               \dd s \given \F^n_{i-1}\right) \\
& \leq & \Delta_n \ET\left(\sup_{s \in [0,\Delta_n]} \abs{ b(X_{t^n_{i-1}+s};\theta)
  -  b(X_{t^n_{i-1}};\theta)}^2 \given \F^n_{i-1}\right) \\
& \leq_{C}&	 \Delta_n^{2}(1+|X_{t^n_{i-1}}|)^{C}.
\end{eqnarray*}
and, as a consequence,
\begin{equation}\label{eqn:xi}
\ET\left((\xi^*_{2,i})^2 \given \F^n_{i-1}\right) = \Delta_n^2
R(\Delta_n,X_{t^n_{i-1}};\theta), 
\end{equation}

\vspace*{0.2cm}

The extension to arbitrary $f \in \C^4_p(S)$ is based on Taylor
expansions of the general form \eqref{eq:taylor}. First, a Taylor
expansion combined with the Euler-Itô expansion (\ref{eq:YX-identity}), implies that
\begin{eqnarray*}
f(Y_i)
& = & \sum_{j=0}^{2} \frac{1}{j!} \partial_x^j f(X_{t^n_{i-1}}) (Y_i-X_{t^n_{i-1}})^j + \frac{1}{6} \partial_x^3 f(Z^n_i) (Y_i-X_{t^n_{i-1}})^3 \\
& = & f(X_{t^n_{i-1}}) + \Delta_n^{1/2} \partial_x f(X_{t^n_{i-1}}) b(X_{t^n_{i-1}};\theta)\xi_{1,i} + \xi_{2,i}
\end{eqnarray*}
where
\begin{equation}\label{eq:xi1}
\xi_{1,i} = \xi^*_{1,i} = \Delta_n^{-3/2}\int_{(i-1)\Delta_n}^{i\Delta_n} (i\Delta_n-s) dB_s,
\end{equation}
and $\xi_{2,i} = \sum_{k=1}^5 \xi^{(k)}_{2,i}$ with $\xi^{(1)}_{2,i}
=  \partial_x f(X_{t^n_{i-1}}) \xi^*_{2,i}$, $\xi^{(2)}_{2,i}  =
\Delta_n \frac{1}{2} \partial^2_x f(X_{t^n_{i-1}})
b^2(X_{t^n_{i-1}};\theta) (\xi^*_{1,i})^2$, $\xi^{(3)}_{2,i}  =
\frac{1}{2} \partial^2_x f(X_{t^n_{i-1}})(\xi^*_{2,i})^2$, 
$\xi^{(4)}_{2,i}  =  \Delta_n^{1/2} \partial^2_x f(X_{t^n_{i-1}})
b(X_{t^n_{i-1}};\theta)\xi^*_{1,i}\xi^*_{2,i}$ and $\xi^{(5)}_{2,i}  =
\frac{1}{6} \partial_x^3 f(Z^n_i) (Y_i-X_{t^n_{i-1}})^3$. 

Each $\xi^{(k)}_{2,i}$, $k = 1, \ldots , 5$, is measurable
w.r.t.\ $\F^n_i$ so it only remains to show that $\xi_{2,i}$ satisfies
the moment expansions \eqref{eq:exp1-YX} and \eqref{eq:exp2-YX}. 
By applying the previously derived conditional moment expansions $\ET\left(\xi^*_{1,i}
  \given \F^n_{i-1}\right) = 0$, $\ET\left((\xi^*_{1,i})^2 \given
  \F^n_{i-1}\right) = \frac{1}{3}$, (\ref{eq:YX-identity}) and \eqref{eqn:xi}
it follows immediately that
\begin{eqnarray*}
\ET\left(\xi^{(1)}_{2,i} \given \F^n_{i-1}\right) & = & \Delta_n \frac{1}{2} a(X_{t^n_{i-1}};\theta) \partial_x f(X_{t^n_{i-1}}) + \Delta_n^{3/2} R(\Delta_n,X_{t^n_{i-1}};\theta), \\
\ET\left(\xi^{(2)}_{2,i} \given \F^n_{i-1}\right) & = & \Delta_n \frac{1}{6} \partial^2_x f(X_{t^n_{i-1}}) b^2(X_{t^n_{i-1}};\theta), \\
\ET\left(\xi^{(3)}_{2,i} \given \F^n_{i-1}\right) & = & \Delta_n^2 R(\Delta_n,X_{t^n_{i-1}};\theta).
\end{eqnarray*}
Furthermore, by Hölder's inequality,
\begin{equation*}
|\ET\left(\xi^*_{1,i}\xi^*_{2,i} \given \F^n_{i-1}\right)| \leq \ET\left((\xi^*_{1,i})^2 \given \F^n_{i-1}\right)^{1/2} \ET\left((\xi^*_{2,i})^2 \given \F^n_{i-1}\right)^{1/2} = \Delta_n R(\Delta_n,X_{t^n_{i-1}};\theta),
\end{equation*}
implying $\ET\left(\xi^{(4)}_{2,i} \given \F^n_{i-1}\right) = \Delta_n^{3/2}
R(\Delta_n,X_{t^n_{i-1}};\theta)$,  and finally, by Lemma \ref{lem:YX-rem},
$\ET\left(\xi^{(5)}_{2,i} \given \F^n_{i-1}\right) = \Delta_n^{3/2}
R(\Delta_n,X_{t^n_{i-1}};\theta)$. Collecting our observations,
\begin{eqnarray*}
\lefteqn{\ET\left(\xi_{2,i} \given \F^n_{i-1}\right) 
	=	\sum_{k=1}^5 \ET\left(\xi^{(k)}_{2,i} \given \F^n_{i-1}\right)} \\
&	=	& \Delta_n \left(\frac{1}{2} a(X_{t^n_{i-1}};\theta) \partial_x f(X_{t^n_{i-1}}) + \frac{1}{6} b^2(X_{t^n_{i-1}};\theta) \partial^2_x f(X_{t^n_{i-1}})\right) + \Delta_n^{3/2} R(\Delta_n,X_{t^n_{i-1}};\theta) \\
&	=	& \Delta_n \left(\frac{1}{2} \GT f(X_{t^n_{i-1}}) - \frac{1}{12} b^2(X_{t^n_{i-1}};\theta) \partial^2_x f(X_{t^n_{i-1}})\right) + \Delta_n^{3/2} R(\Delta_n,X_{t^n_{i-1}};\theta).
\end{eqnarray*}

To argue that $\ET(\xi^2_{2,i}  \hspace*{0.05cm} |
\hspace*{0.05cm} \F^n_{i-1}) = \Delta_n^2
R(\Delta_n,X_{t^n_{i-1}};\theta)$, we combine a lower order Taylor
expansion with (\ref{eq:YX-identity}) to obtain
\begin{eqnarray*}
f(Y_i)
& = & f(X_{t^n_{i-1}}) + \partial_x f(X_{t^n_{i-1}}) (Y_i-X_{t^n_{i-1}}) + \frac{1}{2} \partial_x^2 f(Z^n_i) (Y_i-X_{t^n_{i-1}})^2 \\
& = & f(X_{t^n_{i-1}}) + \Delta_n^{1/2} \partial_x f(X_{t^n_{i-1}})
      b(X_{t^n_{i-1}};\theta)\xi_{1,i} + \xi_{2,i}, 
\end{eqnarray*}
from which we get an alternative expression for the  the remainder term $\xi_{2,i}$:
\begin{equation*}
\xi_{2,i} = \partial_x f(X_{t^n_{i-1}}) \xi^*_{2,i} + \frac{1}{2} \partial^2_x f(Z^n_i)(Y_i-X_{t^n_{i-1}})^2.
\end{equation*}
This expression implies that
\begin{equation*}
\xi^2_{2,i} \leq_C [\partial_x f(X_{t^n_{i-1}})]^2 (\xi^*_{2,i})^2 + [\partial^2_x f(Z^n_i)]^2 (Y_i-X_{t^n_{i-1}})^4
\end{equation*}
and, by applying \eqref{eqn:xi} and Lemma \ref{lem:YX-rem}, that
$\ET\left(\xi^2_{2,i} \given \F^n_{i-1}\right) = \Delta_n^2
R(\Delta_n,X_{t^n_{i-1}};\theta)$. 

Finally, by the definitions \eqref{eqn:var1} and \eqref{eq:xi1} and the Itô isometry,
\begin{eqnarray*}
\ET(\varepsilon_{1,i}\xi_{1,i})
&=& \ET\left(\Delta_n^{-1/2} \int_{(i-1)\Delta_n}^{i\Delta_n} dB_s  \cc \Delta_n^{-3/2} \int_{(i-1)\Delta_n}^{i\Delta_n} (i\Delta_n-s) dB_s\right) \\
&=& \Delta_n^{-2} \cc \int_{(i-1)\Delta_n}^{i\Delta_n} (i\Delta_n-s) \dd s
= \frac{1}{2}.
\end{eqnarray*}
\end{proof}

\begin{proof}[Proof of Lemma \ref{lem:LLN}]
The Lemma follows from Lemma~3.1 in \cite{ped} if we show that
\begin{equation}\label{eq:YX-LLN}
\frac{1}{n} \sum_{i=1}^n \left[f(Y_i)-f(X_{t^n_{i-1}})\right] = o_{\Pnull}(1).
\end{equation}
By applying the bound \eqref{eq:YX-bound} for conditional
expectations, we obtain
\begin{equation*}
\frac{1}{n} \sum_{i=1}^n \Enull\left(|f(Y_i)-f(X_{t^n_{i-1}})| \given
  \F^n_{i-1}\right) = \Delta_n^{1/2} \frac{1}{n} \sum_{i=1}^n
R(\Delta_n,X_{t^n_{i-1}};\theta_0) = o_{\Pnull}(1), 
\end{equation*}
\begin{equation*}
\frac{1}{n^2} \sum_{i=1}^n \Enull\left(|f(Y_i)-f(X_{t^n_{i-1}})|^2
  \given \F^n_{i-1}\right) = \Delta_n \frac{1}{n^2} \sum_{i=1}^n
R(\Delta_n,X_{t^n_{i-1}};\theta_0) = o_{\Pnull}(1), \hspace*{0.1cm} 
\end{equation*}
from which \eqref{eq:YX-LLN} follows by Lemma~9 in \cite{edc-1993}.
\end{proof}

\begin{proof}[Proof of Lemma \ref{lem:CLT}]
This result follows from Proposition~3.4 in \cite{ped}, if the
following strengthening of (\ref{eq:YX-LLN}) holds
\begin{equation}\label{eq:YX-CLT}
\sqrt{n\Delta_n} \cc \frac{1}{n} \sum_{i=1}^n \left[f(Y_i)-f(X_{t^n_{i-1}})\right] = o_{\Pnull}(1),
\end{equation}
To prove this, note that by Proposition \ref{prop:YX},
\begin{eqnarray*}
\lefteqn{\sqrt{n\Delta_n} \cc \frac{1}{n} \sum_{i=1}^n
  \Enull\left(f(Y_i)-f(X_{t^n_{i-1}}) \given \F^n_{i-1}\right)} \\
& = & \sqrt{n\Delta_n} \cc \frac{1}{n} \sum_{i=1}^n \Enull\left(\xi_{2,i}\given \F^n_{i-1}\right)
 =  \sqrt{n\Delta_n^3} \cc \frac{1}{n} \sum_{i=1}^n R(\Delta_n,X_{t^n_{i-1}};\theta_0)
 =  o_{\Pnull}(1),
\end{eqnarray*}
where we use that $n\Delta_n^3 \to 0$. Moreover, the higher order
bound \eqref{eq:YX-bound} ensures that 
\begin{equation*}
\frac{\Delta_n}{n} \sum_{i=1}^n \Enull\left(|f(Y_i)-f(X_{t^n_{i-1}})|^2 \given \F^n_{i-1}\right) = \frac{\Delta_n^2}{n} \sum_{i=1}^n R(\Delta_n,X_{t^n_{i-1}};\theta_0) = o_{\Pnull}(1),
\end{equation*}
and \eqref{eq:YX-CLT} follows from Lemma~9 in \cite{edc-1993}.
\end{proof}

\begin{proof}[Proof of Theorem \ref{thm:PBEF.s}]
By applying the first order expansion \eqref{eq:G.s} of $\ET f(Y_1)$
together with Lemma \ref{lem:LLN}, we see that
\begin{equation*}
H_n(\theta) =	\frac{1}{n}\sum_{i=1}^n \left[f(Y_i)-\ET f(Y_1)\right] 
= \frac{1}{n}\sum_{i=1}^n \left[f(Y_i) - \mu_\theta(f)\right] +
\Delta_n R(\Delta_n;\theta) \cPnull	 H(\theta),
\end{equation*}
where $H(\theta) = (\mu_0-\mu_\theta)(f)$. Under Condition \ref{cond:G.s},
\begin{equation*}
\partial_\theta H_n(\theta) = -\partial_\theta \ET f(Y_1) = -\partial_\theta\mu_\theta(f) + \Delta_n \partial_\theta R(\Delta_n;\theta) \to -\partial_\theta\mu_\theta(f),
\end{equation*}
and for any compact subset $\M$ of $\Theta$
\begin{equation*}
\sup_{\theta \in \M} |\partial_\theta
H_n(\theta)+\partial_\theta\mu_\theta(f)| = \Delta_n \sup_{\theta \in
  \M} |\partial_\theta R(\Delta_n;\theta)| \leq {C(\M)}\Delta_n \to 0.
\end{equation*}
Because $H(\theta_0)=0$, we have now verified the conditions of
Theorem~2.5 in \cite{jjms}, from which the existence of a
consistent sequence of $G_n$-estimators $(\hat{\theta}_n)$
follows. That the estimator $\hat{\theta}_n$ is unique in any
compact subset $\K \subseteq \Theta$ that contains $\theta_0$ with
$\Pnull$-probability going to one as $n \to \infty$ follows from
Theorem~2.7 in \cite{jjms}, because the identifiability assumption implies
that $H(\theta) \neq 0$ for $\theta \neq \theta_0$.

To establish asymptotic normality, note that \eqref{eq:G.s}, the
additional rate assumption $n\Delta_n^3 \to 0$ and Lemma \ref{lem:CLT} ensure that
\[
\sqrt{n\Delta_n} \cc H_n(\theta_0) = \sqrt{n\Delta_n} \cc
\left(\frac{1}{n} \sum_{i=1}^n f^*(Y_i)\right) + \sqrt{n\Delta_n^3} R(\Delta_n;\theta_0)
\cDnull \N\left(0,V_0(f)\right).
\]
Now (\ref{eq:AN.s}) follows by a standard Taylor expansion argument,
see e.g.\ Theorem 2.11 in \cite{jjms}.
\end{proof}

\begin{proof}[Proof of Lemma \ref{lem:a}]
We break the proof into three steps: \textbf{Step 1:} expand $[\ET
f(Y_1)]^2$, $\ET f^2(Y_1)$ and $\ET\left[f(Y_1)f(Y_2)\right]$ in
powers of $\Delta_n$, \textbf{Step 2:} eliminate $\HT$ from the
expansions, \textbf{Step 3:} calculate expansions of
$\breve{a}_n(\theta)_0$ and $\breve{a}_n(\theta)_1$. 

\paragraph{Step 1}

Using Proposition \ref{prop:YX}, we find that
\begin{eqnarray*}
[\ET f(Y_1)]^2	&=& \mu_\theta(f)^2 + \Delta_n
                    2\mu_\theta(f)\mu_\theta(\HT f) + \Delta_n^{3/2}
                    R(\Delta_n;\theta), \\
\ET f^2(Y_1)	&=& \mu_\theta(f^2) + \Delta_n \mu_\theta(\HT f^2) + \Delta_n^{3/2}
                    R(\Delta_n;\theta). 
\end{eqnarray*}

To expand $\ET[f(Y_1)f(Y_2)]$ we note that by Proposition
\ref{prop:YX} this mixed moment equals
\begin{eqnarray*}
\ET\left[\left(f(X_0) + \Delta_n^{1/2} \partial_x f(X_0) b(X_0;\theta)
  \xi_{1,1}+\xi_{2,1}\right)\left(f(X_{\Delta_n}) + \Delta_n^{1/2}
  \partial_x f(X_{\Delta_n}) b(X_{\Delta_n};\theta)
  \xi_{1,2}+\xi_{2,2}\right)\right], 
\end{eqnarray*}
and then we expand the 9 terms of this expectation individually.

\vspace{2mm}

\noindent
{\it Term 1}:
\begin{equation*}
\ET\left[f(X_0)f(X_{\Delta_n})\right] = \mu_\theta(f^2) + \Delta_n \mu_\theta(f \GT f) + \Delta_n^2 R(\Delta_n;\theta)
\end{equation*}
because by Proposition \ref{prop:XX}
\begin{equation*}
f(X_0) f(X_{\Delta_n}) = f(X_0)^2 +\Delta_n^{1/2}f(X_0)\partial_x
f(X_0) b(X_0;\theta) \varepsilon_{1,1}+f(X_0) \varepsilon_{2,1}.
\end{equation*}

\noindent
{\it Term 2}:
\begin{equation*}
\Delta_n^{1/2} \ET\left[f(X_0)\partial_x f(X_{\Delta_n}) b(X_{\Delta_n};\theta) \xi_{1,2}\right]=0,
\end{equation*}
because $\ET\left(\xi_{1,2} \given \F_{\Delta_n}\right)$ = 0.

\vspace{2mm}

\noindent
{\it Term 3}: By applying the moment expansions \eqref{eq:exp1-YX} and
(\ref{eq:condmomexp}) 
\begin{eqnarray*}
\lefteqn{\ET\left[f(X_0)\xi_{2,2}\right]
= \ET\left[f(X_0) \ET\left(\xi_{2,2} \given \F_{\Delta_n}\right)\right] 
= \Delta_n \ET\left[f(X_0) \HT f(X_{\Delta_n})\right] + \Delta_n^{3/2}
  R(\Delta_n;\theta)} \\ 
&=& \Delta_n \ET\left[f(X_0) \ET\left(\HT f(X_{\Delta_n}) \given
    \F_0\right)\right] + \Delta_n^{3/2} R(\Delta_n;\theta)
= \Delta_n \mu_\theta(f \HT f) + \Delta_n^{3/2} R(\Delta_n;\theta).
\end{eqnarray*}

\noindent
{\it Term 4}: By the  Euler-Itô expansion \eqref{eq:XX},
\begin{eqnarray*}
\lefteqn{\Delta_n^{1/2} \ET\left[\partial_xf(X_0)b(X_0;\theta)\xi_{1,1}f(X_{\Delta_n})\right]} \\
&=& \Delta_n
   \ET\left[[\partial_xf(X_0)b(X_0;\theta)]^2\xi_{1,1}\varepsilon_{1,1}\right]
   + \Delta_n^{1/2}
   \ET\left[\partial_xf(X_0)b(X_0;\theta)\xi_{1,1}\varepsilon_{2,1}\right] \\
   &=& \Delta_n \frac{1}{2} \mu_\theta\left([b(\ccs;\theta)\partial_x
       f]^2\right) + \Delta_n^{3/2} R(\Delta_n;\theta). 
\end{eqnarray*}
The last equality holds because, by \eqref{eq:isometry}, $\ET\left(
  \xi_{1,1}\varepsilon_{1,1} \given \F_0\right)=1/2$, and because by
Hölder's inequality and \eqref{eq:exp2-XX}, and since $\xi_{1,1} \sim
\N(0,1/3)$ and is independent of $\F^n_0$, we see that 
\begin{equation*}
|\ET\left(\xi_{1,1}\varepsilon_{2,1} \given \F_0\right)| \leq \ET\left(\xi_{1,1}^2 \given \F_0\right)^{1/2} \ET\left(\varepsilon_{2,1}^2 \given \F_0\right)^{1/2} = \Delta_n R(\Delta_n,X_0;\theta).
\end{equation*}

\noindent
{\it Term 5}: This term equals
\begin{equation*}
\Delta_n^2 \ET\left[\partial_x f(X_0) b(X_0;\theta) \xi_{1,1} \partial_x f(X_{\Delta_n}) b(X_{\Delta_n};\theta) \ET\left(\xi_{1,2} \given \F_{\Delta_n}\right)\right] = 0,
\end{equation*}
since $\ET\left(\xi_{1,2} \given \F_{\Delta_n}\right) = 0$ by Proposition \ref{prop:YX}.

\vspace{2mm}

\noindent
{\it Term 6}: This term equals
\begin{equation*}
\Delta_n^{1/2} \ET\left[\partial_x f(X_0) b(X_0;\theta)
  \ET\left(\xi_{1,1} \xi_{2,2} \given \F_0\right)\right] =
\Delta_n^{3/2} R(\Delta_n;\theta), 
\end{equation*}
because by Hölder's inequality, $|\ET\left(\xi_{1,1} \xi_{2,2}
  \given \F_0\right)| \leq \ET\left(\xi_{1,1}^2 \given
  \F_0\right)^{1/2} \ET\left(\xi_{2,2}^2 \given \F_0\right)^{1/2}$, 
and by Proposition \ref{prop:YX}, $\ET\left(\xi_{1,1}^2 \given
  \F_0\right) = 1/3$ and
\begin{equation*}
\ET\left(\xi_{2,2}^2 \given \F_0\right)=\ET\left[\ET\left(\xi_{2,2}^2 \given \F_{\Delta_n}\right) \given \F_0\right]=\ET\left[\Delta_n^2 R(\Delta_n,X_{\Delta_n};\theta) \given \F_0\right]=\Delta_n^2 R(\Delta_n,X_0;\theta).
\end{equation*}

\noindent
{\it Term 7}: By the Euler-Itô expansion \eqref{eq:XX},
\begin{eqnarray*}
  \lefteqn{\ET\left[f(X_{\Delta_n})\xi_{2,1}\right]} \\
  &=& \ET\left[f(X_0)\ET\left(\xi_{2,1} \given \F_0\right)\right] +
  \Delta_n^{1/2} \ET\left[\partial_x f(X_0)b(X_0;\theta)
  \ET\left(\varepsilon_{1,1}\xi_{2,1} \given \F_0\right)\right] +
      \ET(\varepsilon_{2,1}\xi_{2,1}) \\
  &=& \Delta_n \mu_\theta(f \HT f) + \Delta_n^{3/2} R(\Delta_n;\theta)
  \end{eqnarray*}
  where the last equality holds because, by Proposition \ref{prop:YX},
  $\ET\left(\xi_{2,1} \given \F_0\right) = \Delta_n \HT f(X_0) +
  \Delta_n^{3/2} R(\Delta_n, X_0;\theta)$, and by Hölder's inequality
  and Propositions \ref{prop:XX} and \ref{prop:YX},
  $\ET\left(\varepsilon_{1,1}\xi_{2,1} \given \F_0\right) = \Delta_n
  R(\Delta_n,X_0;\theta)$ and
  $\ET(\varepsilon_{2,1}\xi_{2,1})=\Delta_n^2 R(\Delta_n;\theta)$. 

  \vspace{2mm}

\noindent
{\it Term 8}: By Proposition \ref{prop:YX}, this term equals
\begin{equation*}
\ET\left[\xi_{2,1} \partial_x f(X_{\Delta_n}) b(X_{\Delta_n};\theta) \ET\left(\xi_{1,2} \given \F_{\Delta_n}\right)\right] = 0.
\end{equation*}

\noindent
{\it Term 9}: By combining Hölder's inequality and (\ref{eq:exp2-YX}), we obtain
\begin{equation*}
|\ET\left(\xi_{2,1}\xi_{2,2}\right)| \leq \ET\left[\ET\left(\xi_{2,1}^2 \given \F_0\right)\right]^{1/2} \ET\left[\ET\left(\xi_{2,2}^2 \given \F_{\Delta_n}\right)\right]^{1/2} = \Delta_n^2 R(\Delta_n;\theta).
\end{equation*}

\vspace*{0.2cm}

Finally, we add the expansions of the nine terms and conclude that
\begin{eqnarray*}
\lefteqn{\ET f(Y_1)f(Y_2) =} \\
&& \mu_\theta(f^2) + \Delta_n\left(\mu_\theta(f \GT f) + 2\mu_\theta(f
   \HT f) + \frac{1}{2} \mu_\theta\left([b(\ccs;\theta)\partial_x
   f]^2\right)\right) + \Delta_n^{3/2} R(\Delta_n;\theta). 
\end{eqnarray*}

\paragraph{Step 2}

To eliminate $\HT$ from the expansions of $[\ET f(Y_1)]^2$, $\ET
f^2(Y_1)$ and $\ET\left[f(Y_1)f(Y_2)\right]$, we rewite
$\mu_\theta(\HT f)$, $\mu_\theta(f \HT f)$ and $\mu_\theta(\HT f^2)$
using the definition of $\HT$, \eqref{eq:HT}, and that $\mu_\theta(\GT f)=0$
for all $f \in \D_{\A_\theta}$; see e.g. \cite{mmp}. It follows
immediately that
\begin{eqnarray*}
\mu_\theta(\HT f) &=& -\frac{1}{12}
 \mu_\theta\left(b^2(\ccs;\theta)\partial^2_x f \right) \\ 
\mu_\theta(f \HT f) &=& \frac{1}{2} \mu_\theta(f \GT f) - \frac{1}{12}
\mu_\theta\left(f b^2(\ccs;\theta)\partial^2_x f\right).
\end{eqnarray*}
Moreover, since $\partial_x f^2 = 2 f \partial_x f$ and $\partial^2_x f^2 = 2\left[f\partial^2_x f+(\partial_x f)^2\right]$,
\begin{eqnarray*}
\HT f^2(x)
&=& \frac{1}{2} a(x;\theta) \partial_x f^2(x) + \frac{1}{6} b^2(x;\theta) \partial_x^2 f^2(x) \\
&=& f(x) a(x;\theta)\partial_x f(x) + \frac{1}{3} f(x) b^2(x;\theta) \partial_x^2 f(x) + \frac{1}{3} [b(x;\theta)\partial_x f(x)]^2 \\
&=& f(x) \GT f(x) - \frac{1}{6} f(x) b^2(x;\theta) \partial_x^2 f(x) + \frac{1}{3} [b(x;\theta)\partial_x f(x)]^2,
\end{eqnarray*}
which shows that
\begin{equation*}
\mu_\theta(\HT f^2) = \mu_\theta(f \GT f) - \frac{1}{6}
\mu_\theta\left(f b^2(\ccs;\theta) \partial_x^2 f\right) + \frac{1}{3}
\mu_\theta\left([b(\ccs;\theta)\partial_x f]^2\right). 
\end{equation*}

Thus
\begin{eqnarray}
  \label{eqn:M1}
 [\ET f(Y_1)]^2 &=& \mu_\theta(f)^2 + \Delta_n M_0(\theta) + \Delta_n^{3/2}
                    R(\Delta_n;\theta) \\
  \ET f^2(Y_1) &=& \mu_\theta(f^2) + \Delta_n M_1(\theta) + \Delta_n^{3/2}
                    R(\Delta_n;\theta) \label{eqn:M2} \\
\ET f(Y_1)f(Y_2) &=& \mu_\theta(f^2) + \Delta_n M_2(\theta) + \Delta_n^{3/2}
                    R(\Delta_n;\theta), \label{eqn:M3}
\end{eqnarray}
where
\begin{eqnarray*}
M_0(\theta) &=& - \frac{1}{6}
\mu_\theta(f)\mu_\theta\left(b^2(\ccs;\theta)\partial^2_x  f\right) \\
M_1(\theta) &=& \mu_\theta(f \GT f) -
                \frac{1}{6}\mu_\theta\left(f
                b^2(\ccs;\theta)\partial^2_x
                f\right)+\frac{1}{3}\mu_\theta\left([b(\ccs;\theta)\partial_x
                f]^2\right) \\                  
M_2(\theta) &=& 2\mu_\theta(f \GT f) -
                \frac{1}{6}\mu_\theta\left(f
                b^2(\ccs;\theta)\partial^2_x f\right) + \frac{1}{2}
                \mu_\theta\left([b(\ccs;\theta)\partial_x
                f]^2\right). 
\end{eqnarray*}

\paragraph{Step 3}

From the moment expansions \eqref{eqn:M1}-\eqref{eqn:M3}, it follows that
\begin{eqnarray*}
\breve{a}_n(\theta)_1
&=& \frac{\ET f(Y_1)f(Y_2) - [\ET f(Y_1)]^2}{\VT f(Y_1)} \nonumber \\
&=& \frac{1 + \Delta_n \VT f(X_0)^{-1} (M_2(\theta) - M_0(\theta)) +
    \Delta_n^{3/2} R(\Delta_n;\theta)}{1 + \Delta_n \VT f(X_0)^{-1}
    (M_1(\theta) - M_0(\theta)) + \Delta_n^{3/2} R(\Delta_n;\theta)},
\end{eqnarray*}
and since $1/(1+x) = 1-x+O(x^2)$, we obtain the expansion
\begin{eqnarray}\label{eq:a1}
\breve{a}_n(\theta)_1
&=& 1 + \Delta_n \VT f(X_0)^{-1} \left[M_2(\theta)-M_1(\theta)\right] + \Delta_n^{3/2} R(\Delta_n;\theta) \nonumber \\
&=& 1 + \Delta_n K_f(\theta) + \Delta_n^{3/2} R(\Delta_n;\theta),
\end{eqnarray}
where $K_f(\theta)$ is given by \eqref{eq:K}.

Finally, since by (\ref{eq:YX})  $\ET f(Y_1) = \mu_\theta(f) + \Delta_n R(\Delta_n;\theta)$,
\eqref{eq:a1} implies that 
\begin{equation*}
\breve{a}_n(\theta)_0 = \ET f(Y_1)\left(1-\breve{a}_n(\theta)_1\right) = -\Delta_n K_f(\theta)\mu_\theta(f) + \Delta_n^{3/2} R(\Delta_n;\theta).
\end{equation*}
\end{proof}

\begin{proof}[Proof of Lemma \ref{lem:PBEF.1}]
  Define
\begin{eqnarray}
\label{eqn:g1}	g_1(\Delta_n,Y_i,Y_{i-1};\theta) &=& f(Y_i)-\breve{a}_n(\theta)_0-\breve{a}_n(\theta)_1f(Y_{i-1}), \\
						g_2(\Delta_n,Y_i,Y_{i-1};\theta) &=& f(Y_{i-1})\left[f(Y_i)-\breve{a}_n(\theta)_0-\breve{a}_n(\theta)_1f(Y_{i-1})\right], \nonumber
\end{eqnarray}
and
$H_n(\theta) = (n\Delta_n)^{-1}G_n(\theta)
= (n\Delta_n)^{-1} \sum_{i=2}^n g(\Delta_n,Y_i,Y_{i-1};\theta)$,
where $g=(g_1,g_2)^T$. 

By the expansion \eqref{eq:a} 
\begin{equation}\label{eqn:g1-exp-Y}
g_1(\Delta_n,Y_i,Y_{i-1};\theta) = f(Y_i)-f(Y_{i-1}) + \Delta_n K_f(\theta)\left[\mu_\theta(f)-f(Y_{i-1})\right] + \Delta_n^{3/2} R(\Delta_n,Y_{i-1};\theta),
\end{equation}
and, hence, by the law of large numbers for integrated diffusions (Lemma \ref{lem:LLN}),
\begin{eqnarray*}
\lefteqn{\frac{1}{n\Delta_n} \sum_{i=2}^n g_1(\Delta_n,Y_i,Y_{i-1};\theta)
	=\frac{1}{n\Delta_n} \left[f(Y_n)-f(Y_1)\right]+ \frac{1}{n} \sum_{i=2}^n 
  K_f(\theta)\left[\mu_\theta(f)-f(Y_{i-1})\right]} \\
  && \hspace{5cm} + \Delta_n^{1/2}\frac{1}{n}\sum_{i=2}^nR(\Delta_n,Y_{i-1};\theta)  
\cPnull	K_f(\theta) (\mu_\theta-\mu_0)(f).
\end{eqnarray*}

The second coordinate of $H_n(\theta)$ requires a considerably longer
proof, because the contribution from the first term is not asymptotically
negligible. To shorten the notation, we define
$\E^n_i	= \partial_x f(X_{t^n_{i-1}}) b(X_{t^n_{i-1}};\theta_0)
\varepsilon_{1,i}$ and $\Xi^n_i	= \partial_x f(X_{t^n_{i-1}})
b(X_{t^n_{i-1}};\theta_0) \xi_{1,i}$ and write the expansions
(\ref{eq:XX}) and (\ref{eq:YX})  under the true probability measure $\Pnull$ as
\begin{eqnarray}
\label{eqn:ito.E} f(X_{t^n_i})	&=& f(X_{t^n_{i-1}}) + \Delta_n^{1/2} \E^n_i + \varepsilon_{2,i}, \hspace*{0.8cm} \\
\label{eqn:ito.X} f(Y_i)			&=& f(X_{t^n_{i-1}}) + \Delta_n^{1/2} \Xi^n_i + \xi_{2,i}.
\end{eqnarray}

First note that by (\ref{eqn:ito.X})
\begin{equation}\label{eqn:g2andg1}
g_2(\Delta_n,Y_i,Y_{i-1};\theta) = \left(f(X_{t^n_{i-2}}) + \Delta_n^{1/2} \Xi^n_{i-1} + \xi_{2,i-1}\right)g_1(\Delta_n,Y_i,Y_{i-1};\theta).
\end{equation}
By inserting \eqref{eqn:ito.X} into \eqref{eqn:g1} and
applying the expansion (\ref{eq:a}) of $\breve{a}_n(\theta)$, we find
that 
\begin{eqnarray}\label{eqn:g1-exp-X}
\lefteqn{g_1(\Delta_n,Y_i,Y_{i-1};\theta) = f(X_{t^n_{i-1}})-f(X_{t^n_{i-2}})} \\
&+& \Delta_n K_f(\theta)\left[\mu_\theta(f)-f(X_{t^n_{i-2}})\right] +
    \Delta_n^{1/2} \left(\Xi^n_i-\Xi^n_{i-1}\right) +
    \mathcal{R}_1(\Delta_n,(X_s)_{s \in
    [t^n_{i-2},t^n_i]},\theta_0;\theta), \nonumber
\end{eqnarray}
where the remainder term $\mathcal{R}_1$ has the form
\begin{eqnarray}\label{eqn:R1}
\lefteqn{\mathcal{R}_1(\Delta_n,(X_s)_{s \in [t^n_{i-2},t^n_i]},\theta_0;\theta)}  \\
&=& \left(\xi_{2,i}-\xi_{2,i-1}\right) -
  \Delta_n^{3/2}K_f(\theta)\Xi^n_{i-1} - \Delta_n
  K_f(\theta)\xi_{2,i-1} + \Delta_n^{3/2} R(\Delta_n,Y_{i-1};\theta). \nonumber
\end{eqnarray}
Using \eqref{eqn:ito.E} and inserting the definitions of $\varepsilon_{2,i}$ and
$\xi_{2,i}$, \eqref{eqn:var1} and \eqref{eqn:var2}, we obtain
\begin{equation}\label{eq:ff_exp}
f(X_{t^n_{i-1}}) - f(X_{t^n_{i-2}}) = \Delta_n^{1/2} \E^n_{i-1} + \varepsilon_{2,i-1} = \Delta_n \Gnull f(X_{t^n_{i-2}}) + A_{i-1}(\theta_0) + M_{i-1}(\theta_0),
\end{equation}
where
\begin{eqnarray*}
A_i(\theta)	&=& \int_{(i-1)\Delta_n}^{i\Delta_n} \left[\GT f(X_s) - \GT f(X_{t^n_{i-1}})\right] \dd s, \\
M_i(\theta)	&=& \int_{(i-1)\Delta_n}^{i\Delta_n} \partial_x f(X_s)b(X_s;\theta) dB_s.
\end{eqnarray*}

Now, using \eqref{eqn:g2andg1},\eqref {eqn:g1-exp-X} and
(\eqref{eq:ff_exp}), we obtain the $\Delta$-expansion 
\begin{equation*}
g_2(\Delta_n,Y_i,Y_{i-1};\theta) = \sum_{k=1}^3 g_2^{(k)}(\Delta_n,Y_i,Y_{i-1};\theta),
\end{equation*}
where 
\begin{eqnarray*}
\lefteqn{g_2^{(1)}(\Delta_n,Y_i,Y_{i-1};\theta) = f(X_{t^n_{i-2}}) \cc
  g_1(\Delta_n,Y_i,Y_{i-1};\theta)} \\ 
& = & \Delta_n f(X_{t^n_{i-2}}) \Gnull f(X_{t^n_{i-2}}) + f(X_{t^n_{i-2}})M_{i-1}(\theta_0) \\
&&  \hspace{5mm} + \ \Delta_n K_f(\theta) f(X_{t^n_{i-2}}) \left[\mu_\theta(f)-f(X_{t^n_{i-2}})\right] + \, \mathcal{R}^{(1)}_2(\Delta_n,(X_s)_{s \in
   [t^n_{i-2},t^n_i]},\theta_0;\theta), 
\end{eqnarray*}
\begin{eqnarray*}
			g_2^{(2)}(\Delta_n,Y_i,Y_{i-1};\theta)
&	=	&	\Delta_n^{1/2} \cc \Xi^n_{i-1} \cc g_1(\Delta_n,Y_i,Y_{i-1};\theta) \\
&	=	&	\Delta_n \left(\E^n_{i-1}-\Xi^n_{i-1}\right)\Xi^n_{i-1} + \mathcal{R}^{(2)}_2(\Delta_n,(X_s)_{s \in [t^n_{i-2},t^n_i]},\theta_0;\theta)
\end{eqnarray*}
and
\begin{equation*}
g_2^{(3)}(\Delta_n,Y_i,Y_{i-1};\theta) = \xi_{2,i-1} \cc g_1(\Delta_n,Y_i,Y_{i-1};\theta) = \mathcal{R}^{(3)}_2(\Delta_n,(X_s)_{s \in [t^n_{i-2},t^n_i]},\theta_0;\theta)
\end{equation*}
with
\begin{eqnarray*}
\lefteqn{\mathcal{R}^{(1)}_2(\Delta_n,(X_s)_{s \in [t^n_{i-2},t^n_i]},\theta_0;\theta) =} \\
&& f(X_{t^n_{i-2}})A_{i-1}(\theta_0) + \Delta_n^{1/2}
   f(X_{t^n_{i-2}})\left(\Xi^n_i-\Xi^n_{i-1}\right) + f(X_{t^n_{i-2}})
   \cc \mathcal{R}_1(\Delta_n,(X_s)_{s \in
   [t^n_{i-2},t^n_i]},\theta_0;\theta)
\end{eqnarray*}
and
\begin{eqnarray*}
\lefteqn{\mathcal{R}^{(2)}_2(\Delta_n,(X_s)_{s \in [t^n_{i-2},t^n_i]},\theta_0;\theta) = \Delta_n^{1/2} \Xi^n_{i-1} \varepsilon_{2,i-1}} \\
&+& \Delta_n^{3/2} \Xi^n_{i-1} K_f(\theta) \left[\mu_\theta(f)-f(X_{t^n_{i-2}})\right] + \Delta_n\Xi^n_i\Xi^n_{i-1} + \Delta_n^{1/2} \Xi^n_{i-1} \mathcal{R}_1(\Delta_n,(X_s)_{s \in [t^n_{i-2},t^n_i]},\theta_0;\theta).
\end{eqnarray*}
Collecting the terms,
\begin{eqnarray}\label{eqn:g2-exp-X}
\lefteqn{g_2(\Delta_n,Y_i,Y_{i-1};\theta) 	= \sum_{k=1}^3 g_2^{(k)}(\Delta_n,Y_i,Y_{i-1};\theta)} \\
&	=	&	\Delta_n f(X_{t^n_{i-2}}) \Gnull f(X_{t^n_{i-2}}) + f(X_{t^n_{i-2}})M_{i-1}(\theta_0) + \Delta_n K_f(\theta) f(X_{t^n_{i-2}}) \left[\mu_\theta(f)-f(X_{t^n_{i-2}})\right] \nonumber \\
&&	\hspace{7mm} + \, \Delta_n
            \left(\E^n_{i-1}-\Xi^n_{i-1}\right)\Xi^n_{i-1} +
            \mathcal{R}_2(\Delta_n,(X_s)_{s \in
            [t^n_{i-2},t^n_i]},\theta_0;\theta), \nonumber 
\end{eqnarray}
where the remainder term is
\begin{equation*}
\mathcal{R}_2(\Delta_n,(X_s)_{s \in [t^n_{i-2},t^n_i]},\theta_0;\theta) = \sum_{k=1}^3 \mathcal{R}^{(k)}_2(\Delta_n,(X_s)_{s \in [t^n_{i-2},t^n_i]},\theta_0;\theta).
\end{equation*}

Now, tedious reasoning based on Lemma~9 in \cite{edc-1993} shows that
\begin{equation}\label{eq:AN.R2.1}
\frac{1}{n\Delta_n} \sum_{i=2}^n \mathcal{R}_2(\Delta_n,(X_s)_{s \in [t^n_{i-2},t^n_i]},\theta_0;\theta) = o_{\Pnull}(1).
\end{equation}
Under the additional rate assumption $n\Delta_n^2 \to 0$, it can in a
similar way be proved that
\begin{equation}\label{eq:AN.R2.2}
\frac{1}{\sqrt{n\Delta_n}} \sum_{i=2}^n \mathcal{R}_2(\Delta_n,(X_s)_{s \in [t^n_{i-2},t^n_i]},\theta_0;\theta) = o_{\Pnull}(1). 
\end{equation}
The latter result is not needed in this proof, but it is necessary to show
asymptotic normality in the proof of  Theorem \ref{thm:PBEF.1}, so we
state it here for convenience. To see that the strong rate assumption
$n\Delta_n^2 \to 0$ is necessary to obtain \eqref{eq:AN.R2.2}, we can,
e.g., consider the last term in \eqref{eqn:R1}:
\begin{equation*}
\frac{1}{\sqrt{n\Delta_n}} \sum_{i=2}^n \Delta_n^{3/2} R(\Delta_n,Y_{i-1};\theta) = \sqrt{n\Delta_n^2} \cc \frac{1}{n} \sum_{i=2}^n R(\Delta_n,Y_{i-1};\theta).
\end{equation*}
As the proofs of \eqref{eq:AN.R2.1} and \eqref{eq:AN.R2.2} are both
very long and not particularly enlightening, they are omitted.

To determine the limit in probability of the second coordinate of
$H_n(\theta)$, we consider each term in \eqref{eqn:g2-exp-X}
separately. By the ergodic theorem, see e.g.\  Lemma~3.1 in \cite{ped}, 
\begin{equation*}
\frac{1}{n} \sum_{i=2}^n f(X_{t^n_{i-2}}) \Gnull f(X_{t^n_{i-2}}) \cPnull \mu_0(f \Gnull f)
\end{equation*}
and
\begin{equation*}
\frac{1}{n} \sum_{i=2}^n K_f(\theta) f(X_{t^n_{i-2}}) \left[\mu_\theta(f)-f(X_{t^n_{i-2}})\right] \cPnull K_f(\theta)\left[\mu_0(f)\mu_\theta(f)-\mu_0(f^2)\right].
\end{equation*}
Furthermore, by definitions of $\E^n_i$ and $\Xi^n_i$,
\begin{equation*}
\frac{1}{n} \sum_{i=1}^n \Enull\left(\left(\E^n_i-\Xi^n_i\right)\Xi^n_i \given \F^n_{i-1}\right) = \frac{1}{n}\sum_{i=1}^n [\partial_x f(X_{t^n_{i-1}}) b(X_{t^n_{i-1}};\theta_0)]^2
\Enull\left((\varepsilon_{1,i}-\xi_{1,i})\xi_{1,i} \given \F^n_{i-1}\right),
\end{equation*}
and since $\xi_{1,i} \sim \N(0,1/3)$, \eqref{eq:isometry} implies that
\begin{equation}\label{eq:eps_xi}
\Enull\left((\varepsilon_{1,i}-\xi_{1,i})\xi_{1,i} \given \F^n_{i-1}\right) = \Enull\left((\varepsilon_{1,i}-\xi_{1,i})\xi_{1,i}\right) = \frac{1}{6},
\end{equation}
so
\begin{equation*}
\frac{1}{n} \sum_{i=1}^n \Enull\left(\left(\E^n_i-\Xi^n_i\right)\Xi^n_i \given \F^n_{i-1}\right) \cPnull \frac{1}{6} \mu_0\left([b(\ccs;\theta_0)\partial_x f]^2\right).
\end{equation*}
Finally, since by the definitions of $\varepsilon_{1,i}$ in
\eqref{eqn:var1} and of $\xi_{1,i}$ in \eqref{eq:xi1} the difference
$\varepsilon_{1,i}-\xi_{1,i}$ is Gaussian, Hölder's inequality
and the ergodic theorem imply that
\begin{eqnarray*}
\lefteqn{\frac{1}{n^2} \sum_{i=1}^n \Enull\left((\E^n_i - \Xi^n_i)^2 (\Xi^n_i)^2 \given \F^n_{i-1}\right)} \\
&=&	\frac{1}{n^2} \sum_{i=1}^n [\partial_x f(X_{t^n_{i-1}})
    b(X_{t^n_{i-1}};\theta_0)]^4 \cc
    \Enull\left(\left(\varepsilon_{1,i}-\xi_{1,i}\right)^2 \xi^2_{1,i}
    \given \F^n_{i-1}\right) = o_{\Pnull}(1).
\end{eqnarray*}
Therefore, by Lemma~9 in \cite{edc-1993}
\begin{equation*}
\frac{1}{n} \sum_{i=1}^n \left(\E^n_i-\Xi^n_i\right)\Xi^n_i \cPnull \frac{1}{6} \mu_0\left([b(\ccs;\theta_0)\partial_x f]^2\right).
\end{equation*}

By the similar arguments,
\begin{equation*}
\frac{1}{n\Delta_n} \sum_{i=1}^n f(X_{t^n_{i-1}})M_i(\theta_0) = o_{\Pnull}(1),
\end{equation*}
where we use that $\Enull\left(M_i(\theta_0) \given \F^n_{i-1}\right)=0$.
Moreover, we use that, with $h(x) = \partial_x f(x) b(x;\theta_0)$, and
using the conditional Itô isometry, Tonelli's theorem and (\ref{eq:condmomexp}),
\begin{eqnarray*}
\lefteqn{\Enull\left(M^2_i(\theta_0) \given \F^n_{i-1}\right)
= \Enull\left(\int_{(i-1)\Delta_n}^{i\Delta_n} h^2(X_s) \dd s \given
    \F^n_{i-1}\right) = \int_{(i-1)\Delta_n}^{i\Delta_n}
    \Enull\left(h^2(X_s) \given \F^n_{i-1}\right) \dd s} \\ 
&	=	& \int_0^{\Delta_n} \left[h^2(X_{t^n_{i-1}}) + u \cc R(u,X_{t^n_{i-1}};\theta_0)\right] \dd u = \Delta_n h^2(X_{t^n_{i-1}}) + \Delta_n^2 R(\Delta_n,X_{t^n_{i-1}};\theta_0)
\end{eqnarray*}
and, therefore,
\begin{eqnarray*}
\lefteqn{\frac{1}{n^2\Delta_n^2} \sum_{i=1}^n \Enull\left(f^2(X_{t^n_{i-1}})M^2_i(\theta_0) \given \F^n_{i-1}\right)} \\
&	=	&	\frac{1}{n\Delta_n}\frac{1}{n} \sum_{i=1}^n f^2(X_{t^n_{i-1}}) h^2(X_{t^n_{i-1}}) + \frac{1}{n^2} \sum_{i=1}^n R(\Delta_n,X_{t^n_{i-1}};\theta_0)
=	o_{\Pnull}(1).
\end{eqnarray*}
Gathering our observations, we have verified \eqref{eq:gamma}. \\

To identify the limit of $\partial_{\theta^T} H_n(\theta)$, we write
\begin{equation*}
H_n(\theta) = \frac{1}{n\Delta_n}\sum_{i=2}^n Z_{i-1}\left[f(Y_i)-Z_{i-1}^T \breve{a}_n(\theta)\right],
\end{equation*}
where $Z_{i-1} = (1,f(Y_{i-1}))^T$, which implies that
\begin{equation*}
\partial_{\theta^T} H_n(\theta) = - \frac{1}{n\Delta_n}\sum_{i=2}^n Z_{i-1}Z_{i-1}^T \partial_{\theta^T} \breve{a}_n(\theta) = Z_n(f)A_n(\theta),
\end{equation*}
with $Z_n(f) := \frac{1}{n} \sum_{i=2}^n Z_{i-1}Z_{i-1}^T$ and
$A_n(\theta) := -\Delta_n^{-1} \partial_{\theta^T}
\breve{a}_n(\theta)$. By Lemma \ref{lem:LLN}, 
\begin{equation*}
Z_n(f) \cPnull Z(f) =: \mtwo{1}{\mu_0(f)}{\mu_0(f)}{\mu_0(f^2)},
\end{equation*}
and applying the expansion \eqref{eq:a} of $\breve{a}_n(\theta)$, we
see that
\begin{equation*}
A_n(\theta) =
\partial_{\theta^T}\vtwo{K_f(\theta)\mu_\theta(f)}{-K_f(\theta)} +
\Delta_n^{1/2} \partial_{\theta^T}R(\Delta_n;\theta) \to
\vtwo{\partial_{\theta^T} [K_f(\theta)\mu_\theta(f)]}{-\partial_{\theta^T} K_f(\theta)} =:
A(\theta). 
\end{equation*}
Hence, it follows that $\partial_{\theta^T} H_n(\theta) \cPnull Z(f)A(\theta)$.
To argue that under Condition \ref{cond:G.1}, the convergence is
uniform over any compact subset $\M$ of $\Theta$, note that
\begin{eqnarray*}
\sup_{\theta \in \M} \norm{\partial_{\theta^T} H_n(\theta) - Z(f)A(\theta)}
& \leq	& \sup_{\theta \in \M} \norm{Z_n(f)[A_n(\theta)-A(\theta)]} +
         \sup_{\theta \in \M}  \norm{[Z_n(f)-Z(f)]A(\theta)} \\
 & \leq	&  \norm{Z_n(f)} \sup_{\theta \in
          \M}\norm{A_n(\theta)-A(\theta)} + \norm{Z_n(f)-Z(f)}
          \sup_{\theta \in \M}\norm{A(\theta)}. 
\end{eqnarray*}
Therefore, \eqref{eq:W} follows because $\theta \mapsto A(\theta)$ and
$\norm{\ccs}$ are continuous, and because
\begin{equation*}
\sup_{\theta \in \M}\norm{A_n(\theta)-A(\theta)} = \Delta_n^{1/2}
\sup_{\theta \in \M} \norm{\partial_{\theta^T} R(\Delta_n;\theta)}
\leq_{C(\M)} \Delta_n^{1/2} \to 0. 
\end{equation*}
\end{proof}

\begin{proof}[Proof of Theorem \ref{thm:PBEF.1}]
We use the notation introduced in the proof of Lemma
\ref{lem:PBEF.1}. Because $\gamma(\theta_0,\theta_0) = 0$, the
existence of a consistent sequence of $G_n$-estimators
$(\hat{\theta}_n)$ follows from Lemma \ref{lem:PBEF.1} and 
Theorem 2.5 in \cite{jjms}.  The eventual uniqueness in $\K$ follows from
Lemma \ref{lem:PBEF.1}  and Theorem 2.7 in the same paper.

Asymptotic normality of $\hat{\theta}_n$ follows by a standard
Taylor expansion argument (see e.g.\ Theorem 2.11 in \cite{jjms}) once
we have established that
\begin{equation}\label{eq:AN.H}
\sqrt{n\Delta_n} \cc H_n(\theta_0) \cDnull \N_2(0,V_0(f)).
\end{equation}

From the expansion of $g_1(\Delta_n,Y_i,Y_{i-1};\theta)$ in
\eqref{eqn:g1-exp-Y}, it follows that 
\begin{eqnarray*}
\lefteqn{\frac{1}{\sqrt{n\Delta_n}} \sum_{i=2}^n g_1(\Delta_n,Y_i,Y_{i-1};\theta_0)} \\
&	=	&	\frac{1}{\sqrt{n\Delta_n}} \left[f(Y_n)-f(Y_1)\right] + \sqrt{n\Delta_n}\left(\frac{1}{n} \sum_{i=2}^n f^*_1(Y_{i-1})\right) + \sqrt{n\Delta_n^2} \cc \frac{1}{n} \sum_{i=2}^n R(\Delta_n,Y_{i-1};\theta_0) \\
&	=	&	\sqrt{n\Delta_n}\left(\frac{1}{n} \sum_{i=2}^n
            f^*_1(Y_{i-1})\right) + o_{\Pnull}(1) \cDnull
            \N\left(0,\mu_0\left([\partial_x
            U_0(f^*_1)b(\ccs;\theta_0)]^2\right)\right), 
\end{eqnarray*}
where the convergence in law holds by Lemma \ref{lem:CLT} because
$f^*_1 \in \mathscr{H}_0$. \\ 

Our proof that
\begin{equation}\label{eq:g2}
\frac{1}{\sqrt{n\Delta_n}} \sum_{i=2}^n g_2(\Delta_n,Y_i,Y_{i-1};\theta_0) \cDnull \N\left(0,\mu_0\left(\left[\partial_x \U_0(f^*_2) + f \partial_x f \right]^2 b^2(\ccs;\theta_0)\right)\right)
\end{equation}
is based on the expansion of $g_2$ given by
\eqref{eqn:g2-exp-X}  and the observation that
\begin{equation}\label{eq:g2-equivalence}
\frac{1}{\sqrt{n\Delta_n}} \sum_{i=2}^n \Delta_n \left(\E^n_{i-1}-\Xi^n_{i-1}\right)\Xi^n_{i-1} =
\frac{1}{6} \frac{1}{\sqrt{n\Delta_n}} \sum_{i=2}^n \Delta_n [\partial_x f(X_{t^n_{i-2}})b(X_{t^n_{i-2}};\theta_0)]^2 + o_{\Pnull}(1),
\end{equation}
which follows from Lemma~9 in \cite{edc-1993} using that
$\left(\E^n_{i-1}-\Xi^n_{i-1}\right)\Xi^n_{i-1}$ $= [\partial_x
f(X_{t^n_{i-2}})b(X_{t^n_{i-2}};\theta_0)]^2
\left(\varepsilon_{1,i-1}-\xi_{1,i-1}\right)\xi_{1,i-1}$ 
and that $\Enull\left(\left(\E^n_{i-1}-\Xi^n_{i-1}\right)\Xi^n_{i-1} \given
  \F^n_{i-2}\right)=$ $\frac{1}{6} [\partial_x
f(X_{t^n_{i-2}})b(X_{t^n_{i-2}};\theta_0)]^2$, see (\ref{eq:eps_xi}). 

\vspace*{0.2cm}

Combining \eqref{eq:g2-equivalence},  \eqref{eqn:g2-exp-X} and the result
\eqref{eq:AN.R2.2} that the term involving the remainder term  vanishes, we see that
\begin{eqnarray}\label{eq:g2-decomposition}
\lefteqn{\frac{1}{\sqrt{n\Delta_n}} \sum_{i=2}^n g_2(\Delta_n,Y_i,Y_{i-1};\theta_0)} \\
& =	&	\sqrt{n\Delta_n}\left(\frac{1}{n} \sum_{i=1}^n
      f_2^*(X_{t^n_{i-1}})\right) + \frac{1}{\sqrt{n\Delta_n}}
      \sum_{i=1}^n f(X_{t^n_{i-1}})M_i(\theta_0) + o_{\Pnull}(1). \nonumber
\end{eqnarray}
To gather the non-negligible terms in \eqref{eq:g2-decomposition}, we
initially observe that 
\begin{eqnarray}\label{eqn:negligible}
\lefteqn{\frac{1}{\sqrt{n\Delta_n}} \int_0^{n\Delta_n} f_2^*(X_s) \dd s  
	=	 \sqrt{n\Delta_n} \left(\frac{1}{n}\sum_{i=1}^n
            f_2^*(X_{t^n_{i-1}})\right)} \\  &+& \frac{1}{\sqrt{n\Delta_n}}
            \sum_{i=1}^n \int_{(i-1)\Delta_n}^{i\Delta_n}
            \left[f_2^*(X_s) - f_2^*(X_{t^n_{i-1}})\right] \dd s
	=	 \sqrt{n\Delta_n} \left(\frac{1}{n}\sum_{i=1}^n f_2^*(X_{t^n_{i-1}})\right) + o_{\Pnull}(1), \nonumber
\end{eqnarray}
where we only use that $f_2^* \in \C^2_p(S)$. A proof that the second
term in \eqref{eqn:negligible} is asymptotically negligible under
$\Pnull$ is contained in the proof of Proposition~3.4 in
\cite{ped}. Furthermore, by Proposition~3.3 in the same paper,
$\Gnull\left(U_0(f_2^*)\right) = -f_2^*$ under Condition
\ref{cond:G.1}, and, therefore, by Itô's formula, 
\begin{eqnarray*}
\U_0(f_2^*)(X_t)
&=& \U_0(f_2^*)(X_0) + \int_0^t \Gnull (U_0(f_2^*))(X_s) \dd s + \int_0^t \partial_x U_0(f_2^*)(X_s)b(X_s;\theta_0) dB_s \\
&=& \U_0(f_2^*)(X_0) - \int_0^t f_2^*(X_s) \dd s + \int_0^t \partial_x U_0(f_2^*)(X_s)b(X_s;\theta_0) dB_s.
\end{eqnarray*}
As a consequence,
\begin{eqnarray*}
\sqrt{n\Delta_n}\left(\frac{1}{n} \sum_{i=1}^n f_2^*(X_{t^n_{i-1}})\right)
&=	& \frac{1}{\sqrt{n\Delta_n}} \int_0^{n\Delta_n} f_2^*(X_s) \dd s + o_{\Pnull}(1) \\
&=& \frac{1}{\sqrt{n\Delta_n}} \sum_{i=1}^n \int_{(i-1)\Delta_n}^{i\Delta_n} \partial_x U_0(f_2^*)(X_s)b(X_s;\theta_0) dB_s + o_{\Pnull}(1),
\end{eqnarray*}
and hence
\begin{eqnarray*}
\lefteqn{\frac{1}{\sqrt{n\Delta_n}} \sum_{i=2}^n
  g_2(\Delta_n,Y_i,Y_{i-1};\theta_0)} \\
&	=	&	\sqrt{n\Delta_n}\left(\frac{1}{n} \sum_{i=1}^n
            f_2^*(X_{t^n_{i-1}})\right) + \frac{1}{\sqrt{n\Delta_n}}
            \sum_{i=1}^n f(X_{t^n_{i-1}}) M_i(\theta_0) +
            o_{\Pnull}(1) \\ 
&	=	&	\frac{1}{\sqrt{n\Delta_n}} \sum_{i=1}^n
            \int_{(i-1)\Delta_n}^{i\Delta_n} \left[\partial_x
            U_0(f_2^*)(X_s)+f(X_{t^n_{i-1}})\partial_x f(X_s)\right]
            b(X_s;\theta_0) dB_s + o_{\Pnull}(1). 
\end{eqnarray*}
At this point, the asymptotic normality in \eqref{eq:g2} can be
shown by applying the central limit theorem for martingale difference
arrays, see e.g.\ \cite{hallheyde} or \cite{slt}; for details see pp.\
507-508 in \cite{ped}.  The joint normality in (\ref{eq:AN.H}) follows
by the Cramér-Wold device. 
\end{proof}

	\section{Concluding remarks and extensions}\label{sec:conclusion}

For integrated diffusions observed on $[0,1]$, \cite{id-2008} prove that the statistical model satisfies the LAMN property and that the optimal rate of convergence of estimators of a parameter in the diffusion coefficient is $1/\sqrt{n}$. The optimal rates for integrated diffusion models under the high-frequency/infinite horizon scenario considered in this paper are not known, but the minimum contrast estimators in \cite{id-2006} attain a rate of $1/\sqrt{n\Delta_n}$ for parameters in the drift and $1/\sqrt{n}$ for diffusion parameters under this scenario (similar to the rate optimal estimators for discretely observed diffusions in \cite{eed-2017}), so presumably these rates are optimal. However, as we do not distinguish between drift and diffusion parameters in this paper, the $1/\sqrt{n\Delta_n}$ rate of our parameters is all we could hope for.

An interesting extension would be to introduce a jump component in the dynamics of $(X_t)$. Such an extension has the particular feature that jumps in $(X_t)$ lead to changes in the trend of $(I_t)$ and \emph{not} to path discontinuities. As a consequence, threshold estimators developed for processes with jumps observed at high-frequency (see e.g. \cite{tej}) are not directly transferable. A general test for the presence of volatility jumps using change-point theory was proposed by \cite{cpa}. How and whether the same principle can be applied for parametric inference is an interesting topic for future research.

        \section*{Acknowledgement}

        Emil S. Jørgensen gratefully acknowledges financial support
        during a research stay from the Stevanovich Center for Financial
        Mathematics, University of Chicago.

\end{document}